\newcommand{\bb}{\medbreak}
\newcommand{\nt}{\noindent}
\newcommand{\R}{\mathbb{R}}
\newcommand{\Z}{\mathbb{Z}}
\newcommand{\N}{\mathbb{N}}
\newcommand{\Cc}{\mathbb{C}}
\newcommand{\rt}{\xrightarrow{}}
\newcommand{\xrt}{\xrightarrow}
\newcommand{\cd}{\cdot}
\newcommand{\id}{\text{id}}
\newcommand{\GL}{\text{GL}}
\newcommand{\End}{\text{End}}
\newcommand{\rank}{\text{rank}}
\newcommand{\ux}{\underline{x}}
\newcommand{\uy}{\underline{y}}
\newcommand{\Alg}{\mathop{\mathrm{Alg}}}
\newcommand{\Mod}{\text{-}\mathrm{Mod}}
\newcommand{\cl}{\text{Cl}}
\newcommand{\bbw}{\overline{\overline{W}}}
\newcommand{\bw}{\overline{W}}
\newcommand{\bq}{\overline{q}}
\newcommand{\CR}{\mathcal{R}}
\newcommand{\define}[1]{\textbf{#1}}
\newtheorem{lemma}{Lemma}[section]
\newtheorem{theorem}[lemma]{Theorem}
\newtheorem{cor}[lemma]{Corollary}
\newtheorem{proposition}[lemma]{Proposition}
\theoremstyle{definition}
\newtheorem{definition}[lemma]{Definition}
\newtheorem{example}[lemma]{Example}
\newtheorem{remark}[lemma]{Remark}
\normalfont\fontsize{10}{15}\bfseries}{\thesection}{1em}{}
\normalfont\fontsize{11}{15}\bfseries}{\thesubsection}{1em}{}
\titleformat{\subsection}[runin]{\normalfont\bfseries}{\thesubsection.}{1em}{}{}
\titlespacing{\subsection}{0pt}{5pt}{10pt}
\title{Twists of rational Cherednik algebras}
\author{Y. Bazlov, A. Berenstein, E. Jones-Healey and A. McGaw}
\date{}
\begin{document}

\maketitle

\begin{abstract} 
%We show that braided Cherednik algebras introduced by the first two authors are twists of rational Cherednik algebras of imprimitive complex refection groups by a cocycle ultimately arising from the non-trivial bicharacter of the Klein four-group. This gives a new construction of mystic reflection groups which have Artin-Schelter regular rings of quantum polynomial invariants. As an application of this result, we show that a braided Cherednik algebra has a finite-dimensional representation if and only if its rational counterpart has one.
We show that braided Cherednik algebras introduced by the first two authors are cocycle twists of rational Cherednik algebras of the imprimitive complex reflection groups $G(m,p,n)$, when $m$ is even. This gives a new construction of mystic reflection groups which have Artin-Schelter regular rings of quantum polynomial invariants. As an application of this result, we show that a braided Cherednik algebra has a finite-dimensional representation if and only if its rational counterpart has one.
\end{abstract}

\tableofcontents

%--------------------------------------------------------------

\section{Introduction}

Cocycle twists of associative (and Lie) algebras have their origin in physics literature. Situations when a parametrised family of isomorphic groups of symmetry have a non-isomorphic group as a limit were formalised as ``contractions'' in Inonu and Wigner \cite{Inonu1953}: e.g., the Galilei group of classical mechanics is a limit of relativistic Lorentz groups. Moody and Patera \cite{Moody_1991} show that for graded Lie algebras, contractions are determined by 2-cocycles on the grading group. See also Vafa and Witten \cite{VAFA1995189} where twists by cocycles of a finite abelian group are held as examples of mirror symmetry. There are various applications of cocycle twists within mathematics, for example in non-commutative geometry (see Davies \cite{2015arXiv150206101D}) and colour Lie algebras (see Chen, Silvestrov and Oystaeyen \cite{chen2006representations}).  Cocycle twists also find a generalisation in the language of Hopf algebras, leading to a twist originally due to Drinfeld~\cite{drinfeld_quantum_groups}.  The  Drinfeld twist has been well-studied, see Majid \cite{majid_1995}, and has also found applications in representation theory, see Giaquinto and Zhang \cite{giaquinto1998bialgebra} and Jordan \cite{2008arXiv0805.2766J}. To ascribe physical meaning to twists of an algebra, representations of the algebra also need to be twisted; but this proves to be more difficult, and there is no general approach to this so far.\bb
 
\nt We see the main result of this work, that rational and braided Cherednik algebras are related via a twist, as a stepping stone towards a better understanding of the representation theory of braided Cherednik algebras \cite{Bazlov2008NoncommutativeDO}, for which very little is currently known. In this work we present one result in this direction, showing that finite-dimensional representations of one algebra exist if and only if they do for the twisted partner. We also give an example using one-dimensional representations of the Cherednik algebra, to show that twisting can non-trivially permute the characters of the underlying reflection group.\bb

\nt The contents of this paper are laid out as follows. Sections \ref{reflection_groups_sec}, \ref{cherednik_algs_sec}, \ref{drinfeld_twist_sec} introduce the main definitions and objects of the paper, with the only new result being Theorem~\ref{mystic_presentation}, where we give a presentation of the mystic reflection groups $\mu(G(m,p,n))$. These groups arose independently in the work of Bazlov and Berenstein \cite{Bazlov2008NoncommutativeDO}, as the groups over which the braided Cherednik algebras are defined, and of Kirkman, Kuzmanovich and Zhang \cite{Kirkman2008ShephardToddChevalleyTF}
as a class of groups with Artin-Schelter regular rings of quantum polynomial invariants. Mystic reflection groups were comprehensively studied in \cite{mystic_reflections}. The main result of the paper is found in Section \ref{main_result_sec}, where the braided Cherednik algebra over $\mu(G(m,p,n))$
is shown to be the twist of the rational Cherednik algebra
over the imprimitive complex reflection group $G(m,p,n)$ 
by a cocycle (in fact, a quasitriangular structure) on a finite abelian group. 
A key step in the proof is to verify that the twist preserves the braid relations between the mystic reflection generators of $\mu(G(m,p,n))$ --- this fact turns out to be related to the Clifford Braiding Theorem of Kauffman and Lomonaco~\cite{kauffman2016braiding}.
Finally in Section \ref{twists_of_reps_sec} we use this twisting construction to obtain examples of non-trivial, and finite-dimensional, representations of a braided Cherednik algebra out of representations of a rational Cherednik algebra.

%--------------------------------------------------------------

\section{Reflection groups}\label{reflection_groups_sec}

\subsection{Complex reflection groups.} Let $V$ be an $n$-dimensional $\Cc$-vector space, with dual space $V^*$. If $y\in V^*,\ x\in V$, we denote the evaluation of $y$ on $x$ by $\langle y,x\rangle$. If $G$ is a finite subgroup of $\GL(V)$, then for $g\in G,\ x\in V$ we denote the action of $g$ on $x$ by $g(x)$. Via the contragredient representation we have an action of $G$ on $V^*$: if $g\in G,v\in V^*$, then $\langle g(y),x\rangle=\langle y,g^{-1}(x)\rangle\ \forall x\in V$.\bb

\nt A complex reflection on $V$ is an element $s\in \GL(V)$ that has finite order and satisfies $\rank(s-\id)=1$. Equivalently, the characteristic polynomial for $s$ is $(t-1)^{n-1}(t-\lambda)$ for some root of unity $\lambda\neq 1$. Note that in this case $s$ acts on $V^*$ also as a complex reflection with characteristic polynomial $(t-1)^{n-1}(t-\lambda^{-1})$. A complex reflection group on $V$ is a finite subgroup of $\GL(V)$ generated by complex reflections on $V$.\bb %Note if $G_1\subseteq \GL(V_1),\ G_2\subseteq \GL(V_2)$ are complex reflection groups, then the direct product $G_1\times G_2$ is naturally a complex reflection group on $V_1\oplus V_2$.\bb

\nt We fix a basis $\{x_1,\dots,x_n\}$ for $V$, and dual basis $\{y_1,\dots,y_n\}$ of $V^*$. This allows us to identify $\GL(V)$ with the group $\GL_n(\Cc)$ of $n\times n$-invertible matrices. The groups of permutation matrices and diagonal matrices are given respectively as
$$
\mathbb{S}_n=\{w\in \GL(V)\mid \forall i\ w(x_i)\in \{x_1,\dots,x_n\}\},
\qquad 
\mathbb{T}_n=\{t\in \GL(V)\mid \forall i\ t(x_i)\in \Cc x_i\}.
$$
We see that
$$
\mathbb{S}_n\cap \mathbb{T}_n=\{\id\},\qquad
\mathbb{S}_n \text{ normalises } \mathbb{T}_n \text{ within } \GL_n(\Cc), \qquad 
\text{so that } \mathbb{S}_n\ltimes \mathbb{T}_n\subseteq \GL_n(\Cc).
$$
In particular, $\mathbb S_n$ acts on $\mathbb T_n$ 
by conjugation inside $\GL_n(\Cc)$; we will write this action as $w(t)$ for $w\in \mathbb S_n$, $t\in \mathbb T_n$.\bb

\nt For parameters $m,p\in \N$ with $p\mid m$, let $C_{\frac{m}{p}}\subseteq C_m\subset \Cc^\times$ be the finite multiplicative subgroups of $\Cc^\times$ of $\frac{m}{p}$-th, respectively $m$-th, roots of unity. Besides the $34$ exceptional cases, every irreducible complex reflection group belongs to the following family of imprimitive subgroups of $\mathbb{S}_n\ltimes \mathbb{T}_n$,
\begin{equation*}% \label{eq:semidirect_product}
G(m,p,n)= \mathbb{S}_n\ltimes T(m,p,n), % \{wt\in \mathbb{S}_n\ltimes (C_m)^n|\ \det(t)\in C_{\frac{m}{p}}\}
\end{equation*}
where
$$
T(m,p,n) = \{ t\in \mathbb{T}_n \mid  t^m=\id, \, \det(t)\in C_{\frac{m}{p}}\}
$$
%where $(C_m)^n$ is identified with the group of diagonal matrices with diagonal entries in $C_m$. 
is the group of diagonal matrices  with diagonal entries in $C_m$ whose product is in $C_{\frac{m}{p}}$.
The complex reflections of $G(m,p,n)$ are given by
$$
S=\{s_{ij}^{(\epsilon)}\mid i,j\in [n], i \ne j,\ \epsilon\in C_m\}\cup \{t_i^{(\zeta)}\mid i\in [n], \zeta\in C_{\frac{m}{p}}\backslash \{1\}\}
$$
where $[n]:=\{1,2,\dots,n\}$, and for general $\epsilon\in \Cc^\times$,
\begin{equation*}%%\label{complex_reflections_maps}
s_{ij}^{(\epsilon)}(x_k):=
\begin{cases}
x_k, & k\neq i,j,\\
\epsilon^{-1} x_j, & k=i,\\
\epsilon x_i, & k=j.
\end{cases}\qquad t_i^{(\epsilon)}(x_k)=\epsilon^{\delta_{ik}}x_k.
\end{equation*}

\nt Note that the groups $G(1,1,n)$, $G(2,1,n)$, $G(2,2,n)$ correspond to the Coxeter groups of type $A_{n-1}$, $B_n$, $D_n$ respectively, whilst $G(p,p,2)$ corresponds to the dihedral group $I_2(p)$.\bb
%We exclude $G(2,2,2)$ from consideration as this is $G(2,1,1)\times G(2,1,1)$.%More explicitly, there is a correspondence between Coxeter groups and real reflection groups (the analogues of complex reflection groups acting on $\R$-vector spaces), and any group acting on an $\R$-vector space $V_\R$ naturally acts on the $\Cc$-vector space $V=V_\R \otimes_\R \Cc$. It is only once we fix an $\R$-basis $B$ of $V_\R$, and view each Coxeter (or real reflection) group as acting on this complexified space $V$ (on which $B$ forms a $\Cc$-basis) that we find they coincide with the groups $G(m,p,n)$.

\iffalse
\begin{example}\label{g212} $G(2,1,2)$. This will be our running example throughout the paper. As stated above it corresponds to the Coxeter group of type $B_2$, with defining presentation $\langle s,r|\ s^2=r^2=(sr)^4=1 \rangle$, and is isomorphic to the dihedral group of order $8$. It has complex reflections:
$$s_{12}=\begin{pmatrix}0 & 1 \\1 & 0\end{pmatrix},\ 
s_{12}^{(-1)}=\begin{pmatrix}0 & -1\\-1 & 0\end{pmatrix},\ 
t_1=\begin{pmatrix}-1 & 0\\ 0 & 1\end{pmatrix},\ 
t_2=\begin{pmatrix}1 & 0\\ 0 & -1\end{pmatrix}$$
%We have omitted the superscripts $(\epsilon),(\zeta)$ above the reflections when it trivial or there is only one choice. 
We can identify complex reflections with the generators of the above presentation via $s\mapsto s_{12},\ r\mapsto t_1$. This implies $rsr\mapsto s_{12}^{(-1)}, \ srs\mapsto t_2$. The conjugacy classes of this group are as follows: $\cl(1)=\{1\}$, $\cl(s)=\{s,rsr\}$, $\cl(r)=\{r,srs\}$, $\cl(sr)=\{sr,rs\}$, $\cl(rsrs)=\{rsrs\}$.
\end{example}
\fi

%\subsection{Conjugacy classes.}
\nt It will be relevant to the construction of the rational Cherednik algebras in which conjugacy class each complex reflection lies in. The reflections $s_{ij}^{(\epsilon)}$ with $\epsilon\in C_m$ are involutions and form a single conjugacy class in $G(m,p,n)$, unless $n=2$ and $p$ is even. %We see this is also true in the example above (where $n=2)$ by inspecting the conjugacy class $\cl(s)$.
Additionally for each $\zeta\in C_{\frac{m}{p}}\backslash \{1\}$, the $\{t_i^{(\zeta)}\mid i\in [n]\}$ forms a separate conjugacy class.\bb %Again we see this holds for the above example where the only element in $C_{\frac{2}{1}}\backslash \{1\}$ is $-1$, and the group elements $r,srs$ (corresponding to $t_1^{(-1)},t_2^{(-1)}$) are indeed in the same conjugacy class.

%\subsection{Invariant theory.}
\nt Recall also that complex reflection groups are characterised in terms of their polynomial invariants. If $G$ is a finite subgroup of $\GL(V)$, the action of $G$ on $V$ extends naturally to algebra automorphisms of the symmetric algebra $S(V)$. The invariant set $S(V)^G$ forms a subalgebra of $S(V)$.

\begin{theorem}[Chevalley-Shephard-Todd]\label{chev_shep_todd} For $V$ and $G$ as above, the invariant ring $S(V)^G$ is a polynomial algebra if and only if $G$ is a complex reflection group.
\end{theorem}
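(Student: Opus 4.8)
The plan is to prove both implications. Write $R = S(V)$, let $R^G_+$ be the ideal of positive-degree elements of the graded ring $R^G$, let $I = R\cdot R^G_+$ be the Hilbert ideal, and let $\pi\colon R\to R^G$, $\pi(f) = \tfrac1{|G|}\sum_{g\in G}g(f)$, be the Reynolds operator, which is $R^G$-linear and restricts to the identity on $R^G$. Since $R$ is module-finite over $R^G$ (Noether), $R^G$ is a finitely generated graded $\Cc$-algebra with $\dim R^G = n$.

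\nt $(\Leftarrow)$ Assume $G$ is generated by reflections. Fix a minimal homogeneous generating set $f_1,\dots,f_r$ of the ideal $I$. Applying $\pi$ to an expression $f=\sum_i a_i f_i$ of a positive-degree homogeneous invariant $f$ and inducting on $\deg f$ gives $R^G = \Cc[f_1,\dots,f_r]$; hence it suffices to show $f_1,\dots,f_r$ are algebraically independent, as then $r = \dim R^G = n$ and we are done. The engine is a syzygy lemma: if $g_1,\dots,g_k\in R^G$ are homogeneous with $g_1\notin\sum_{i\ge 2}R^G g_i$ and $\sum_i h_i g_i = 0$ with $h_i\in R$ homogeneous, then $h_1\in I$. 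This is proved by induction on $\deg h_1$: for a reflection $s$, the element $s(p)-p$ is divisible in $R$ by a fixed linear form spanning $\im(s-\id_V)$, so applying $s$ to the syzygy, subtracting, cancelling this linear form, and invoking the inductive hypothesis yields $s(h_1)-h_1\in I$; as the reflections generate $G$ and $I$ is $G$-stable, $g(h_1)-h_1\in I$ for all $g\in G$, and averaging gives $\pi(h_1)-h_1\in I$, whence $h_1\in I$. Now suppose $P(f_1,\dots,f_r)=0$ is a nonzero relation of least weighted degree, $T_i$ weighted by $\deg f_i$. Since $\Cc$ has characteristic zero not all $\partial P/\partial T_i$ vanish, and by minimality each $(\partial_i P)(f_1,\dots,f_r)$ is either zero with $\partial_i P\equiv 0$ or nonzero; reordering, $g_1 := (\partial_1 P)(f_1,\dots,f_r)\notin\sum_{i\ge2}R^G g_i$. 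Differentiating $P(f_1,\dots,f_r)=0$ by each coordinate $x_\ell$ gives syzygies $\sum_i g_i\,\partial f_i/\partial x_\ell = 0$, so the lemma yields $\partial f_1/\partial x_\ell\in I$ for every $\ell$; a degree count forces $\partial f_1/\partial x_\ell\in(f_2,\dots,f_r)$, and Euler's identity $(\deg f_1)f_1 = \sum_\ell x_\ell\,\partial f_1/\partial x_\ell$ then gives $f_1\in(f_2,\dots,f_r)$, contradicting minimality.

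\nt $(\Rightarrow)$ Assume $R^G = \Cc[f_1,\dots,f_n]$ with $\deg f_i = d_i$, so $R^G$ has Hilbert series $\prod_i(1-t^{d_i})^{-1}$; equate this with Molien's series $\tfrac1{|G|}\sum_{g\in G}\det(1-tg|_V)^{-1}$ and expand both sides in Laurent series at $t=1$. The order-$n$ pole (on the group side only $g=\id$ contributes it) gives $|G| = \prod_i d_i$; the order-$(n-1)$ pole (contributed by the reflections, using $\tfrac1{1-\lambda}+\tfrac1{1-\lambda^{-1}}=1$ and that $s\mapsto s^{-1}$ permutes reflections) gives that the number $N$ of reflections in $G$ equals $\sum_i(d_i-1)$. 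Let $G'\le G$ be the subgroup generated by all reflections of $G$. By the direction just proved $R^{G'} = \Cc[f_1',\dots,f_n']$ with $\deg f_i' = d_i'$, $|G'| = \prod_i d_i'$, and — since $G$ and $G'$ have the same reflections — $N = \sum_i(d_i'-1)$, so $\sum_i d_i = \sum_i d_i'$. Finally $R^{G'}$, being a polynomial ring, is Cohen--Macaulay, and $f_1,\dots,f_n$ is a homogeneous system of parameters for it, so $R^{G'}$ is a free graded $R^G$-module of rank $[G:G']$; for a homogeneous basis $\{u_j\}$ we get $\sum_j t^{\deg u_j} = \prod_i(1-t^{d_i})/\prod_i(1-t^{d_i'})$, a polynomial of degree $\sum_i d_i - \sum_i d_i' = 0$. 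Hence every $u_j$ is a scalar, which forces the rank to be $1$; so $R^{G'} = R^G$ and $G = G'$ is generated by reflections.

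\nt The heart of the argument, and the main obstacle, is the syzygy lemma together with the algebraic-independence proof in $(\Leftarrow)$: that is the only place the reflection hypothesis genuinely enters — through the divisibility of $s(p)-p$ by a linear form — and the degree bookkeeping and the passage to a minimal sub-ideal require care. In $(\Rightarrow)$ the only delicate points are the Laurent-expansion computation at $t=1$ and the freeness of $R^{G'}$ over $R^G$, both of which are essentially routine given that $R^{G'}$ is a polynomial ring.
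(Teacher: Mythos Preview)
The paper does not prove Theorem~\ref{chev_shep_todd}: it is quoted as the classical Chevalley--Shephard--Todd theorem, stated as background with no proof supplied. Your argument is correct and is essentially the standard proof --- Chevalley's syzygy-lemma argument for the implication ``reflection group $\Rightarrow$ polynomial invariants'', and the Molien-series degree count combined with freeness of $R^{G'}$ over $R^G$ for the converse. A couple of minor points that could be made more explicit: the equivalence of $g_1\notin\sum_{i\ge2}R^G g_i$ and $g_1\notin\sum_{i\ge2}R g_i$ (needed for the syzygy lemma to apply after you reorder) follows by applying the Reynolds operator; and the final step $R^{G'}=R^G \Rightarrow G'=G$ uses that $G$ acts faithfully on $V$, hence on $\mathrm{Frac}\,S(V)$, so equality of fixed fields forces equality of groups. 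With those clarifications the write-up is complete.
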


\subsection{Mystic reflection groups.}

Each of the complex reflection groups $G(m,p,n)$, for $m$ even, has a so called ``mystic partner", which is another subgroup of $\mathbb{S}_n\ltimes \mathbb{T}_n$, defined as follows:
\begin{equation*}%\label{mugmpn}
\mu(G(m,p,n)):=\{wt\in \mathbb{S}_n\ltimes T(m,1,n)\mid \det(wt)\in C_{\frac{m}{p}}\}.
\end{equation*}
These are examples of mystic reflection groups, which are defined by generalising the Chevalley-Shephard-Todd characterisation of complex reflection groups (Theorem \ref{chev_shep_todd}), see \cite{Kirkman2008ShephardToddChevalleyTF}. The same class of groups was obtained independently in \cite{Bazlov2008NoncommutativeDO}, see also \cite{mystic_reflections}.

\begin{definition}\label{mystic_reflection_group} For a matrix $q\in \text{Mat}_n(\Cc)$ with $q_{ij}q_{ji}=1,\ q_{ii}=1$, let $S_q(V)$ be the algebra generated by $V$ subject to relations $x_ix_j=q_{ij}x_j x_i$ for $i,j\in [n]$. A finite group $G$ is a \define{mystic reflection group} if it has a faithful action by degree-preserving automorphisms on $S_q(V)$ such that the invariant subalgebra $S_q(V)^G$ is isomorphic to $S_{q'}(V)$ for some $q'$.
\end{definition}

% Explain ``mystic partner" terminology?

\nt The groups $G(m,p,n)$ are related to their mystic partners in the following ways:
\begin{equation}\label{mystic_complex_reln}\mu(G(m,p,n)) \begin{cases}
=G(m,p,n) & \text{if }\frac{m}{p} \text{ is even},\\
\cong G(m,p,n) & \text{if } \frac{m}{p}\text{ and } n \text{ are odd,}\\
\not\cong G(m,p,n) & \text{if } \frac{m}{p}\text{ is odd and } n \text{ is even.}
\end{cases}\end{equation}

\nt Even though the groups $\mu(G(m,p,n))$ and $G(m,p,n)$ in some cases coincide as a subgroup of $\mathbb{S}_n\ltimes \mathbb{T}_n$, 
the generating set for $\mu(G(m,p,n))$ relevant for what follows 
is the set 
\begin{equation}\label{mystic_reflections}
\underline{S}:=\{\sigma_{ij}^{(\epsilon)} \mid i,j\in [n],i\neq j,\ \epsilon\in C_m\}\cup \{t_i^{(\zeta)}\mid i\in [n], \zeta\in C_{\frac{m}{p}}\backslash \{1\}\}
\end{equation}
where
\begin{equation}\label{sigma_map}\sigma_{ij}^{(\epsilon)}(x_k)=\begin{cases}
x_k & k\neq i,j,\\
\epsilon^{-1} x_j & k=i,\\
-\epsilon x_i & k=j.
\end{cases}
\end{equation}
We call the elements of $\underline{S}$ \define{mystic reflections}. Notice $\sigma_{ij}^{(\epsilon)}\in \mathbb{S}_n\ltimes \mathbb{T}_n$ are of order $4$, with characteristic polynomial $(x-1)^{n-2}(x^2+1)$. Similarly to above, when $n\geq 3$, the $\sigma_{ij}^{(\epsilon)}$ form a single conjugacy class, whilst for each $\zeta\in C_{\frac{m}{p}}\backslash \{1\}$ the $t_i^{(\zeta)}$ again form separate conjugacy classes. 

%\begin{example} $\mu(G(2,1,2))$. By \eqref{mystic_complex_reln}, we know $\mu(G(2,1,2))=G(2,1,2)$. However using the mystic reflections as generators produces a presentation for this group that differs from the Coxeter presentation given in Example \ref{g212}. Using the terminology from that example, we see $\sigma_{12}^{(1)}=t_1 s_{12}$,  and $\sigma_{12}^{(-1)}=s_{12}t_1$. Using symbols $\sigma,t$, corresponding to $\sigma_{12}^{(1)}$ and $t_1$ respectively, we find this group has the following presentation:
%$$\langle \sigma,t|\ \sigma^4=t^2=(\sigma t)^2=1\rangle$$
%which is clearly not a Coxeter system. Notice that in this special case (where $n=2$), we find $\sigma_{12}^{(1)}$ and $\sigma_{12}^{(-1)}$ lie in the same conjugacy class, since these elements correspond in the Coxeter presentation to $rs$ and $sr$ respectively and by Example \ref{g212}, $\cl(rs)=\{rs,sr\}$.
%\end{example}

\subsection{A presentation of mystic reflection groups.}\label{presentations_sec}

It turns out that for fixed $n$, each mystic reflection group $\mu(G(m,p,n))$ contains the Tits group of type $A_{n-1}$, introduced in \cite[Section 4.6]{TITS196696}. The Tits group is realised as $\mu(G(2,2,n))$, the mystic partner of the Coxeter group of type~$D_n$; it is the group of even elements in the Coxeter group $G(2,1,n)$ of type $B_n$.

\begin{theorem}[a presentation of $\mu(G(m,p,n))$]\label{mystic_presentation}
For all even $m$ and all divisors $p$ of $m$,
the abstract group generated by symbols
$\sigma_1,\dots,\sigma_{n-1}$ and the abelian group $T(m,p,n)$,
subject to the relations
\begin{itemize}
	\item[(i)] $\sigma_i^2 = t_i^{(-1)} t_{i+1}^{(-1)}$,
	\item[(ii)] the braid relations $\sigma_i\sigma_j=\sigma_j\sigma_i$,
	$i-j>1$,
	$\sigma_i\sigma_{i+1}\sigma_i =\sigma_{i+1}\sigma_i\sigma_{i+1}$,
	$1\le i\le n-2$, and
	\item[(iii)] $\sigma_i t \sigma_i^{-1} = s_{i,i+1}(t)$
	for all $t\in T(m,p,n)$
\end{itemize}
is isomorphic to $\mu(G(m,p,n))$ via the map 
$\sigma_i\mapsto \sigma_{i,i+1}^{(1)}$ and 
the identity map on $T(m,p,n)$.
\end{theorem}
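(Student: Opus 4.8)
The plan is the standard two-step verification of a presentation. Write $\Gamma$ for the abstract group defined by (i)--(iii). First I would produce a surjective homomorphism $\phi\colon\Gamma\to\mu(G(m,p,n))$ by sending $\sigma_i\mapsto\sigma_{i,i+1}^{(1)}$ and $t\mapsto t$ for $t\in T(m,p,n)$; then I would bound $|\Gamma|$ from above by $|\mu(G(m,p,n))|$, so that $\phi$ is forced to be an isomorphism. For the order of the target, restricting the projection $\mathbb{S}_n\ltimes T(m,1,n)\to\mathbb{S}_n$ to $\mu(G(m,p,n))$ gives a short exact sequence $1\to T(m,p,n)\to\mu(G(m,p,n))\to\mathbb{S}_n\to 1$: the kernel is $\mu(G(m,p,n))\cap\mathbb{T}_n$, which (using $\mathbb{S}_n\cap\mathbb{T}_n=\{\id\}$ and the determinant condition) is exactly $T(m,p,n)$, and the projection is onto because each $\sigma_{i,i+1}^{(1)}$ maps to the transposition $s_{i,i+1}$. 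Hence $|\mu(G(m,p,n))|=n!\,|T(m,p,n)|=n!\,m^{n}/p$.

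To check that $\phi$ is well defined I would first record the factorisation $\sigma_{i,i+1}^{(1)}=s_{i,i+1}\,t_{i+1}^{(-1)}$ inside $\mathbb{S}_n\ltimes\mathbb{T}_n$ (compare the action on $x_i$ and $x_{i+1}$, recalling that $m$ even forces $-1\in C_m$, so $t_{i+1}^{(-1)}\in\mathbb{T}_n$). Relations (i) and (iii) then drop out using only that $\mathbb{T}_n$ is abelian and that conjugation by $s_{i,i+1}$ swaps the $i$-th and $(i+1)$-th diagonal entries: $(\sigma_{i,i+1}^{(1)})^{2}=\big(s_{i,i+1}t_{i+1}^{(-1)}s_{i,i+1}\big)t_{i+1}^{(-1)}=t_i^{(-1)}t_{i+1}^{(-1)}$, which lies in $T(m,p,n)$, and $\sigma_{i,i+1}^{(1)}\,t\,(\sigma_{i,i+1}^{(1)})^{-1}=s_{i,i+1}\,t\,s_{i,i+1}=s_{i,i+1}(t)\in T(m,p,n)$ for $t\in T(m,p,n)$. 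Among the braid relations (ii), the far-commutation $\sigma_i\sigma_j=\sigma_j\sigma_i$ for $|i-j|>1$ is immediate since $\sigma_{i,i+1}^{(1)}$ and $\sigma_{j,j+1}^{(1)}$ act on disjoint coordinate pairs; the substantive point is the Yang--Baxter relation $\sigma_{i,i+1}^{(1)}\sigma_{i+1,i+2}^{(1)}\sigma_{i,i+1}^{(1)}=\sigma_{i+1,i+2}^{(1)}\sigma_{i,i+1}^{(1)}\sigma_{i+1,i+2}^{(1)}$, which I would verify by a direct $3\times 3$ computation on $\spn(x_i,x_{i+1},x_{i+2})$ (both sides act by $x_i\mapsto x_{i+2}$, $x_{i+1}\mapsto -x_{i+1}$, $x_{i+2}\mapsto x_i$). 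Conceptually this is the manifestation of the Clifford Braiding Theorem advertised in the introduction: $\sigma_{i,i+1}^{(1)}=s_{i,i+1}t_{i+1}^{(-1)}$ is a Coxeter generator multiplied by a sign operator of Clifford type. Granting these relations, $\phi$ exists; its image contains $T(m,p,n)$, surjects onto $\mathbb{S}_n$, and meets $\mathbb{T}_n$ only in $T(m,p,n)$, so by the order count above $\phi$ is surjective.

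For the upper bound, let $T\le\Gamma$ denote the image of the adjoined abelian group $T(m,p,n)$, so $|T|\le|T(m,p,n)|$. Relation (iii) says every $\sigma_i$ normalises $T$; since $\Gamma$ is generated by the $\sigma_i$ together with $T$, the subgroup $T$ is normal. In $\Gamma/T$ the images $\bar\sigma_i$ satisfy $\bar\sigma_i^{2}=\bar 1$ (the right-hand side of (i) lies in $T$) together with the braid relations (ii); this is precisely the Coxeter presentation of $\mathbb{S}_n$, so $\Gamma/T$ is a quotient of $\mathbb{S}_n$ and $|\Gamma/T|\le n!$. Hence $|\Gamma|=|T|\cdot|\Gamma/T|\le n!\,|T(m,p,n)|=|\mu(G(m,p,n))|$, and combined with the surjectivity of $\phi$ this proves that $\phi$ is an isomorphism.

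I expect the only genuine obstacle to be the braid relation for the mystic reflections $\sigma_{i,i+1}^{(1)}$ --- everything else is bookkeeping --- which is exactly why the Clifford Braiding Theorem is invoked. It is worth keeping track of which relation plays which structural role: relation (i) is what collapses the braid group to the symmetric group once one quotients out the diagonal part, while relation (iii) is what pins the diagonal part of $\Gamma$ down to $T(m,p,n)$ and so prevents $|\Gamma|$ from exceeding the target.
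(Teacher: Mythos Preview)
Your proof is correct. The surjectivity and well-definedness arguments are essentially identical to the paper's: both check the factorisation $\sigma_{i,i+1}^{(1)}=s_{i,i+1}t_{i+1}^{(-1)}$ and the semidirect-product relations, and both show surjectivity by observing that the image of $\phi$ contains $T(m,p,n)$ and covers $\mathbb{S}_n$ under the projection.

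The upper bound, however, is handled differently. You argue that $T\trianglelefteq\Gamma$ (by (iii)) and that $\Gamma/T$ satisfies the Coxeter relations for $\mathbb{S}_n$ (since (i) kills $\bar\sigma_i^{2}$), whence $|\Gamma|\le|T|\cdot|\mathbb{S}_n|\le|T(m,p,n)|\cdot n!$. The paper instead invokes an external result: for $m=p=2$ the abstract group is the Tits group $\mathcal T$ of type $A_{n-1}$ (citing \cite{ADAMS2017142}), and for general even $m$ one writes $W=\overline{\mathcal T}\cdot\overline Q$ with $Q$ a transversal of $T(2,2,n)$ in $T(m,p,n)$, yielding $|W|\le|\mathcal T|\cdot|Q|=|\mathbb{S}_n||T(m,p,n)|$. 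Your route is more self-contained, needing only the standard Coxeter presentation of $\mathbb{S}_n$; the paper's route, by contrast, makes the r\^ole of the Tits group $\mu(G(2,2,n))$ as the ``core'' of every $\mu(G(m,p,n))$ explicit, which ties in with the narrative of \S\ref{presentations_sec}.

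One small expository remark: your invocation of the Clifford Braiding Theorem here is misplaced. In the paper that theorem concerns the $\star$-braid relations among the $s_i$ inside the \emph{twisted} algebra $(\Cc G,\star)$ (Lemma~\ref{lem:braid_relations}), not the ordinary braid relations among the $\sigma_{i,i+1}^{(1)}$ in $\GL(V)$, which --- as you correctly do --- are verified by a direct $3\times 3$ calculation.
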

\begin{proof}
Let $W$ be the quotient of the free product 
of a free group on $\{\sigma_i\mid  i\in [n-1]\}$ with $T(m,p,n)$ by relations (i)--(iii). If $m=p=2$, then by \cite[Lemma 2.1]{ADAMS2017142}, $W$ is the Tits group $\mathcal T\subset \mathrm{SL}_n(\Cc)$ of type $A_{n-1}$, which surjects 
onto $\mathbb S_n$ with kernel $T(2,2,n)$. Therefore, for $m$ even, the subgroup of $W$ generated by 
$\sigma_1,\dots,\sigma_{n-1}$ and $T(2,2,n)$
is some quotient $\overline{\mathcal T}$ of $\mathcal T$.
Moreover, by rearranging generators using relation (iii)  we can write $W$ as $\overline{\mathcal T}\cdot \overline Q$ where $Q$ is a 
transversal of $T(2,2,n)$ in $T(m,p,n)$. 
Hence
$$
|W| \le |\mathcal T|\cdot |Q| 
= |\mathbb S_n||T(2,2,n)| \cdot |T(m,p,n)| / | T(2,2,n)|
= |\mathbb S_n||T(m,p,n)|.
$$
% By \cite[Theorem 2.8]{mystic_reflections}, the group $G(m,p,n)$ 
% is of the same order as its mystic partner $\mu(G(m,p,n))$.
Now observe that relations (i)--(iii) hold in  $\mu(G(m,p,n))$:
one checks them using the factorisation 
$\sigma_{i,i+1}^{(1)}=s_{i,i+1}t_{i+1}^{(-1)}$ in $\mathbb S_n \ltimes \mathbb T_n$, the 
Coxeter relations for 
the $s_{i,i+1}$ and the semidirect product relations.
Hence the map $W \to \mu(G(m, p, n))$, given in the Theorem, is well-defined. 
We show that this map is surjective, i.e., $\sigma_{i,i+1}^{(1)}$, $i\in [n-1]$ and elements of $T(m,p,n)$ generate the group $\mu(G(m,p,n))$.
The composite homomorphism $\mu(G(m,p,n)) \hookrightarrow \mathbb S_n \ltimes \mathbb T_n \xrightarrow{\mathrm{proj}_1} \mathbb S_n$ carries the $\sigma_{i,i+1}^{(1)}$ to the 
generators $s_{i,i+1}$ of $\mathbb S_n$ so is surjective with kernel 
$T(m,p,n)$. Hence the $\sigma_{i,i+1}^{(1)}$ generate $\mu(G(m,p,n))$ modulo $T(m,p,n)$, as required.
We thus have
$$
|W| \le |\mathbb S_n||T(m,p,n)| = |\mu(G(m,p,n))|,
$$
so the surjective homomorphism $W\to \mu(G(m,p,n))$ is a bijection.
\end{proof}

\section{Rational and braided Cherednik algebras}\label{cherednik_algs_sec}

\subsection{Rational Cherednik algebras.}

For complex reflection group $G\subseteq \GL(V)$, let $S$ denote the complex reflections in $G$. Let $c:S\rt \Cc, s\mapsto c_s$ be such that $c_{gsg^{-1}}=c_s\ \forall g\in G,s\in S$. Using $c$, we define a bilinear map
$$\kappa_c:V^*\times V\rt \Cc G,\ (y,x)\mapsto \langle y,x\rangle 1 +\sum_{s\in S}c_s \langle y,(\id-s)x\rangle s.
$$
The following algebras were introduced by Etingof and Ginzburg in \cite{etingof2002symplectic}:
\begin{definition} \label{def:rca}
	The \define{rational Cherednik algebra} $H_c(G)$ is generated by $V,\Cc G,V^*$, subject to the relations: $\forall x,x'\in V,\ y,y'\in V^*,\ g\in G$,
\begin{itemize}
    \item $xx'-x'x=yy'-y'y=0$
    \item $gx=g(x)g,\ yg=g\cd g^{-1}(y)$
    \item $yx-xy=\kappa_c(y,x)$
\end{itemize}
\end{definition} 

\nt Note that when $c=0$, the algebra $H_0(G)=\mathcal{A}(V)\# \Cc G$, the smash product of the Weyl algebra with the group algebra $\Cc G$. Whilst $H_0(G)$ has no finite-dimensional modules, $H_c(G)$ can have finite-dimensional modules for special values of $c$.\bb

\begin{theorem}[The PBW theorem for rational Cherednik algebras, \protect{\cite[Theorem 1.3]{etingof2002symplectic}}]
\label{thm:pbw_rational}	
Let $x_1,\dots,x_n$ be basis of $V$, and $y_1,\dots,y_n$ a dual basis for $V^*$. As a $\Cc$-vector space, $H_c(G)$ has basis
$$
\{x_1^{k_1}\dots x_n^{k_n}g y_1^{l_1}\dots y_n^{l_n}
\mid g\in G,\ k_1,\dots,k_n,l_1,\dots,l_n\in \Z_{\geq 0}\}.
$$
In other words, as vector spaces, we have $H_c(G)\cong \Cc[x_1,\dots,x_n]\otimes \Cc G\otimes \Cc[y_1,\dots,y_n]$.
\end{theorem}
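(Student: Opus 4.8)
Since this statement is quoted from \cite{etingof2002symplectic}, the plan is to reconstruct its proof along the now-standard lines for PBW theorems of algebra deformations. The first step is a filtration argument reducing the claim to a statement about an associated graded algebra: place $V$ and $V^*$ in degree $1$ and $\Cc G$ in degree $0$, so that every defining relation of $H_c(G)$ has the shape (a commutator of two degree-$1$ generators) $=$ (an element of degree $\le 0$), together with the degree-preserving relations $gx=g(x)g$ and $yg=g\cdot g^{-1}(y)$. There is then a canonical surjection of graded algebras $S(V\oplus V^*)\#\Cc G\twoheadrightarrow\mathrm{gr}\,H_c(G)$. Moreover a routine rewriting argument --- repeatedly apply $yx=xy+\kappa_c(y,x)$, $gx=g(x)g$, $yg=g\cdot g^{-1}(y)$, multiplication in $G$, and commutativity of the $x$'s and of the $y$'s to express any word as a linear combination of monomials $x_1^{k_1}\cdots x_n^{k_n}\,g\,y_1^{l_1}\cdots y_n^{l_n}$, noting that each step replaces a word by a linear combination of words strictly smaller in a fixed well-founded ordering so the process terminates --- shows that these monomials already span $H_c(G)$. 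Hence the whole content of the theorem is that they are linearly independent, equivalently that the surjection above is an isomorphism.

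For linear independence I would invoke a PBW criterion for filtered deformations of $S(V\oplus V^*)\#\Cc G$ --- such as the one of Braverman and Gaitsgory --- which identifies the associated graded of such a deformation with the undeformed algebra exactly when the ``constant part'' $\kappa_c$ of the relations satisfies (a) $G$-equivariance and (b) a Jacobi-type identity. Condition (a), namely $g\cdot\kappa_c(y,x)=\kappa_c(g(y),g(x))\cdot g$ for all $g\in G$, is immediate from the hypothesis $c_{gsg^{-1}}=c_s$ together with $\langle g(y),(\id-gsg^{-1})g(x)\rangle=\langle y,(\id-s)x\rangle$. Condition (b), after rewriting $gw-wg=(g(w)-w)g$ for $g\in G$ and collecting terms, reduces to the assertion that for every $s\in S$ and all $x,x'\in V$, $y\in V^*$,
$$
\langle y,(\id-s)x\rangle\,(\id-s)x'=\langle y,(\id-s)x'\rangle\,(\id-s)x,
$$
together with the mirror statement interchanging the roles of $V$ and $V^*$. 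This is precisely where the defining property of a complex reflection enters: $\rank(\id-s)=1$, so $(\id-s)v=\beta_s(v)\,\alpha_s$ for a fixed $\alpha_s\in V$ and $\beta_s\in V^*$, whereupon both sides of the displayed identity equal $\langle y,\alpha_s\rangle\,\beta_s(x)\,\beta_s(x')\,\alpha_s$; the mirror statement holds for the same reason, since $s$ acts on $V^*$ as a reflection as well.

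I expect condition (b), the Jacobi identity, to be the only genuine obstacle --- everything else is bookkeeping that the rank-one condition on $\id-s$ disposes of term by term. An alternative route, avoiding the deformation machinery altogether, is Bergman's Diamond Lemma: orient the defining relations as rewriting rules towards the normal form $x^{k}gy^{l}$ and check that every overlap ambiguity is resolvable. The ambiguity $y\,g\,x$ is resolvable exactly because of $G$-equivariance (a); the ambiguities $y\,y'\,x$ and $y\,x\,x'$ are resolvable exactly because of the Jacobi identity (b), hence again by $\rank(\id-s)=1$; and the remaining ambiguities ($y\,y'\,y''$, $x\,x'\,x''$, $g\,h\,x$, $y\,y'\,g$, $g\,h\,k$, \dots) reduce to commutativity of the $x$'s, of the $y$'s, and to associativity of the $G$-actions on $V$ and $V^*$. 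Either way one concludes $\mathrm{gr}\,H_c(G)\cong S(V\oplus V^*)\#\Cc G\cong S(V)\otimes\Cc G\otimes S(V^*)$ as graded vector spaces, which is the assertion; as a byproduct one recovers the faithful polynomial (Dunkl-operator) representation of $H_c(G)$ on $S(V)$, though that is not needed here.
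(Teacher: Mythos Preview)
The paper does not prove this theorem at all: it is stated with a citation to Etingof--Ginzburg \cite{etingof2002symplectic} and then used as a black box (for instance in the proof of Proposition~\ref{prop:action} and in the bijectivity argument of Section~\ref{main_result_sec}). So there is no ``paper's own proof'' to compare your proposal against.

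That said, your reconstruction is sound. The spanning argument is routine, and your reduction of the Jacobi-type condition to the identity
\[
\langle y,(\id-s)x\rangle\,(\id-s)x'=\langle y,(\id-s)x'\rangle\,(\id-s)x
\]
is correct: writing $[s,x']=-((\id-s)x')\,s$ from the semidirect product relation and equating coefficients of each $s\in S$ gives exactly this, and the rank-one property of $\id-s$ finishes it as you describe. The Diamond Lemma alternative you outline is equally valid. For context, the original Etingof--Ginzburg argument is phrased in the language of Koszul-type PBW deformations for the broader class of symplectic reflection algebras, which amounts to the Braverman--Gaitsgory criterion you invoke; your ordering of the logic (PBW first, Dunkl representation as a byproduct) matches theirs.
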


\nt In the following we restrict to the case $G=G(m,p,n)$ in which $m$ is even, and either $n\geq 3$, or $p$ is odd and $n=2$. This means the $s_{ij}^{(\epsilon)}$ form a single conjugacy class, and we denote the value of $c:S\rt \Cc$ on this class as $c_1$. The groups with odd $m$ are excluded because they have no mystic partner. If $n=2$ and $p$ is even, the algebra defined below is not the most general case of the rational Cherednik algebra because it only has a single parameter for the set of complex reflections of the form $s_{ij}^{(\epsilon)}$, although this set is split into two conjugacy classes in $G(m,p,2)$.

\begin{definition}\label{def:RCA} The \define{rational Cherednik algebra} $H_c(G(m,p,n))$ is the algebra generated by $x_1,\dots,x_n\in V$, $g\in G(m,p,n)$, $y_1,\dots,y_n\in V^*$, subject to relations:
\begin{align*}
x_ix_j-x_jx_i=0\qquad
y_iy_j-y_jy_i=0\qquad
gx_i=g(x_i)g\qquad 
g y_i =g(y_i) g\\
y_ix_j-x_jy_i =c_1\sum_{\epsilon\in C_m} \epsilon s_{ij}^{(\epsilon)}\qquad 
y_ix_i-x_iy_i =1-c_1\sum_{j\neq i}\sum_{\epsilon\in C_m}s_{ij}^{(\epsilon)}-\sum_{\zeta\in C_{\frac{m}{p}}\backslash \{1\}}c_\zeta t_i^{(\zeta)}
\end{align*}
%where $c_1$ is the value of $c$ on the conjugacy class of $s_{ij}^{(\epsilon)}$, and 
where $c_\zeta$ the value of $c$ on conjugacy class of $t_i^{(\zeta)}$, for each $\zeta \in C_{\frac{m}{p}}$.
%Additionally, $gg'=g\cd g'$ (i.e. the product on $G(m,p,n)\subseteq H_c$ is the same the product on the group $G(m,p,n)$), and $1_{G(m,p,n)}=1_\Cc$.
\end{definition}

\iffalse
\begin{example}\label{rca_type_b2} $H_c(G(2,1,2))$. In Example \ref{g212}, we described the complex reflections $s_{12},t_1,t_2$ of the group $W=G(2,1,2)$, the Coxeter group of type $B_2$. Note $s_{12}$ and $\{t_1,t_2\}$ lie in distinct conjugacy classes of $W$, so define parameters $c_1:=c(s_{12})$ and $c_{-1}:=c(t_1)=c(t_2)$. If $V\cong \Cc^2$ has basis $x_1,x_2$ and $V^*$ has dual basis $y_1,y_2$ then $H_{t,c}(W)$ is the algebra generated $V,V^*,W$ subject to relations: 
  \begin{align*}
  [x_i,x_j]=0=[y_i,y_j]\qquad g x_i=g(x_i)g \qquad g y_i=g(y_i)g\\
  [y_i,x_i]=t-2c_{-1}t_i\qquad [y_i,x_j]=2c_1s_{ij}
  \end{align*}
\end{example}
\fi

\subsection{Braided Cherednik algebras.}

Consider the mystic reflection group $\mu(G(m,p,n))$, with mystic reflections $\underline{S}$ as in \eqref{mystic_reflections}. We require $m$ to be even in order for the mystic reflection group $\mu(G(m,p,n))$ to be defined; we also assume that $n\ge 3$ or $p$ is odd and $n=2$ as above, so that the mystic reflections $\sigma_{ij}^{(\epsilon)}$
form a single conjugacy class. Similarly to above, we consider a function $c':\underline{S}\rt \Cc$ that is invariant under conjugation in $\mu(G(m,p,n))$, and let ${\ux}_1,\dots,{\ux}_n$ be a basis of $V$, with $\uy_1,\dots,\uy_n$ a dual basis for $V^*$.
The following algebras were introduced in \cite{Bazlov2008NoncommutativeDO}:

\begin{definition} 
	\label{def:negative_braided}
	The \define{negative braided Cherednik algebra} $\underline{H}_{c'}(\mu(G(m,p,n)))$ is the algebra generated by ${\ux}_1,\dots,{\ux}_n\in V$, $g\in \mu(G(m,p,n))$, $\uy_1,\dots,\uy_n\in V^*$, subject to relations:
\begin{align*}{\ux}_i{\ux}_j+{\ux}_j{\ux}_i=0\qquad
  \uy_i\uy_j+\uy_i\uy_j=0\qquad
  g{\ux}_i=g({\ux}_i)g\qquad
  g\uy_i=g(\uy_i)g\\
  \uy_i{\ux}_j+{\ux}_j\uy_i=c'_1 \sum_{\epsilon\in C_m}\epsilon \sigma_{ij}^{(\epsilon)}\qquad
  \uy_i {\ux}_i-{\ux}_i \uy_i =1+c'_1 \sum_{j\neq i}\sum_{\epsilon\in C_m}\sigma_{ij}^{(\epsilon)}+\sum_{\zeta \in C_{\frac{m}{p}}\backslash \{1\}}c'_\zeta t_i^{(\zeta)}
\end{align*}
%Similarly to above, $gg'=g\cd g'\ \forall g,g'\in \mu(G(m,p,n))$ and $1_{\mu(G(m,p,n))}=1_\Cc$.
\end{definition}

\nt The key property of $\underline{H}_{c'}(\mu(G(m,p,n))$ is the PBW-type theorem \cite[Theorem 0.2]{Bazlov2008NoncommutativeDO}. Below we obtain a new proof of this result, see Remark~\ref{rem:new_proof}.

%\begin{theorem} $\underline{H}_{c'}(\mu(G(m,p,n))$ has a $\Cc$-basis
%$$
%\{{\ux}_1^{k_1}\dots {\ux}_n^{k_n} g \uy_1^{k_1}\dots \uy_n^{k_n}
%\mid
% g\in \mu(G(m,p,n)), k_1,\dots,k_n,l_1,\dots,l_n\in \Z_{\geq 0}\}.
%$$
%Equivalently, we have as vector spaces, $\underline{H}_{c'}(\mu(G(m,p,n))\cong S_{-1}(V)\otimes \Cc \mu(G(m,p,n))\otimes S_{-1}(V^*)$, where $S_{-1}(V)$ is one of the algebras given in Definition \ref{mystic_reflection_group}, with $-1$ denoting the matrix with $1$'s on the diagonal and all $-1$'s off the diagonal, inducing relations ${\ux}_i {\ux}_j+{\ux}_j {\ux}_i=0\ \forall i\neq j$.
%\end{theorem}

%---------------------------------------------------------------

\section{The cocycle twist}\label{drinfeld_twist_sec}

Although we will only use cocycles on a finite abelian group, we will work in a more general Hopf algebra setting as it provides the useful language of duality. 
Our notation generally follows \cite{majid_1995}.
If $H$ is a Hopf algebra over $\Cc$, $\triangle\colon H \to H\otimes H$ will denote the coproduct of~$H$ and $\epsilon\colon H \to \Cc$, the counit. An example is the group algebra $H=\Cc T$ of a group $T$, with $\triangle(t)=t\otimes t$ and $\epsilon(t)=1$, extended from $T$ to $\Cc T$ by linearity. The action of $h\in H$ on $a\in A$ where $A$ is an $H$-module will be written as $h\rhd a$. Recall that in an $H$-module algebra $A$, the product map $m\colon A\otimes A \to A$ is a morphism in the category $H\Mod$ of $H$-modules, and $h\rhd 1_A=\epsilon(h)1_A$ for all $h\in H$.

\subsection{Quasitriangular structures and $2$-cocycles.}
We begin with two well-known definitions, see \cite[Definition 2.1.1 and Example 2.3.1]{majid_1995}. 

\begin{definition} A \define{$2$-cocycle} on a Hopf algebra $H$ is an invertible $\chi\in H\otimes H$ such that $(\chi\otimes 1)\cdot (\triangle \otimes \id)(\chi)=(1\otimes \chi)\cdot (\id \otimes \triangle)(\chi)$ and $(\epsilon \otimes \id)(\chi)=1=(\id\otimes \epsilon)(\chi)$.
\end{definition}

\begin{definition} A \define{quasitriangular (QT-) structure} on a Hopf algebra $H$ is an invertible element $\CR\in H\otimes H$ satisfying:
\begin{itemize}
\setlength{\itemindent}{.1in}
	\item[(QT1)] $\CR\cdot (\triangle x)=(\triangle^{\text{op}}x)\cdot \CR\ \forall x\in H$,
	\item[(QT2)] $(\triangle\otimes \id) \CR =\CR_{13}\cdot \CR_{23}$, 
	$(\id\otimes \triangle) \CR =\CR_{13}\cdot \CR_{12}$,
\end{itemize}
where $\CR_{12}:=\CR\otimes 1,\ \CR_{23}:=1\otimes \CR$, and $\CR_{13}$ similarly has $1$ inserted in the middle leg. 
\end{definition}

\nt QT-structures satisfy the quantum Yang-Baxter equation \cite[Lemma 2.1.4]{majid_1995}
\begin{equation}\label{eq:QYBE} 
\CR_{12}\cdot\CR_{13}\cdot\CR_{23} = \CR_{23}\cdot\CR_{13}\cdot \CR_{12}.
\end{equation}
(QT2) and \eqref{eq:QYBE} imply that a QT-structure is a Hopf algebra $2$-cocycle \cite[Example 2.3.1]{majid_1995}.
%Denote both sides of \eqref{eq:QYBE} by $\QYB(\CR)$.

\subsection{Twists.}

It is natural to complement Majid's description of a twisted $H$-module algebra \cite[Section 2.3]{majid_1995} by the observation that twisting by $\chi$ is functorial:
%. If $A$ is an $H$-module algebra, $m\colon A \otimes A \to A$ denotes the product map on  

\begin{proposition}\label{prop:twist_is_functorial}
A $2$-cocycle $\chi\in H\otimes H$ for a Hopf algebra $H$ gives rise to the functor 
$$(\ )_\chi: \Alg(H\Mod) \to \Alg(H_\chi\Mod)$$
which takes an object $(A,m)$ to $(A,m_\chi = m(\chi^{-1}\rhd - ))$, and an arrow $(A,m)\xrightarrow{\phi}(B,m')$ to $(A,m_\chi)\xrightarrow{\phi_\chi}(B,m'_\chi)$,
where $\phi_\chi=\phi$ as $H$-module morphisms. The twisted Hopf algebra $H_\chi$ is defined as having the same algebra structure, and counit, as $H$, but with coproduct $\triangle_\chi(h)=\chi\cdot \triangle(h)\cdot \chi^{-1}$ where $\triangle$ is the coproduct on $H$.
\end{proposition}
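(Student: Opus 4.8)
The plan is to verify directly that each of the claimed assignments — the object map $(A,m)\mapsto(A,m_\chi)$, the arrow map $\phi\mapsto\phi_\chi$, and the target $H_\chi$ — is well defined, and that the assignment respects identities and composition. The starting point is that all the hard structural facts we need are recorded in Majid \cite{majid_1995}: that $H_\chi$ with coproduct $\triangle_\chi(h)=\chi\triangle(h)\chi^{-1}$, the unchanged algebra structure, and the unchanged counit $\epsilon$ is again a Hopf algebra (using the $2$-cocycle condition to get coassociativity of $\triangle_\chi$ and the counit axiom from $(\epsilon\otimes\id)(\chi)=1=(\id\otimes\epsilon)(\chi)$), and that for an $H$-module algebra $(A,m)$ the twisted product $m_\chi=m\circ(\chi^{-1}\rhd-)$ is associative and has the same unit $1_A$. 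So the bulk of the work is just the functoriality bookkeeping, which is not in Majid in this packaged form.

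First I would check the object map lands in $\Alg(H_\chi\Mod)$: the underlying vector space $A$ is an $H_\chi$-module because $H_\chi=H$ as an algebra, so the action is literally unchanged; one must check $m_\chi\colon A\otimes A\to A$ is a morphism of $H_\chi$-modules, i.e.\ $h\rhd m_\chi(a\otimes b)=m_\chi(\triangle_\chi(h)\rhd(a\otimes b))$. This is a short computation: expand $m_\chi=m(\chi^{-1}\rhd-)$, use that $m$ is an $H$-module map so $h\rhd m(u)=m(\triangle(h)\rhd u)$, and then $\triangle(h)\chi^{-1}=\chi^{-1}\triangle_\chi(h)$ — which is exactly the definition of $\triangle_\chi$ rearranged — to slide things into place. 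Unitality of $m_\chi$ follows from $\chi^{-1}\rhd(1_A\otimes a)=(\epsilon\otimes\id)(\chi^{-1})\rhd a\otimes\ldots$, using the counit normalisation of the cocycle (equivalently of $\chi^{-1}$, which is also a cocycle), so $1_A$ remains a two-sided unit. Associativity of $m_\chi$ is the content of Majid's twisted-module-algebra construction and I would simply cite it.

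Next, the arrow map. Given an algebra morphism $\phi\colon(A,m)\to(B,m')$ in $\Alg(H\Mod)$, I set $\phi_\chi=\phi$ as a map of underlying vector spaces (equivalently $H=H_\chi$-modules). It is an $H_\chi$-module map for the same reason as before (the module structures are unchanged). It remains to see $\phi$ intertwines the twisted products: $\phi\circ m_\chi=m'_\chi\circ(\phi\otimes\phi)$. Here one uses that $\phi$, being an $H$-module map, commutes with the action of $\chi^{-1}\in H\otimes H$ on $A\otimes A$ (i.e.\ $(\phi\otimes\phi)(\chi^{-1}\rhd u)=\chi^{-1}\rhd(\phi\otimes\phi)(u)$), together with the original intertwining $\phi\circ m=m'\circ(\phi\otimes\phi)$; chaining these gives the claim. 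Finally, $(\id_A)_\chi=\id_A$ is immediate, and $(\psi\circ\phi)_\chi=\psi_\chi\circ\phi_\chi$ holds because on underlying maps nothing has changed and composition of $H$-module maps is composition of the underlying maps. This establishes functoriality.

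The only genuine subtlety — and the step I would flag as the main obstacle — is keeping the variance of $\chi$ versus $\chi^{-1}$ straight: the product twists by $\chi^{-1}$ while the coproduct conjugates by $\chi$, and one must know that $\chi^{-1}$ is again a $2$-cocycle (with the same counit normalisation) so that the object map is legitimate and the associativity citation applies. This is standard (it is implicit in Majid's Example 2.3.1), and once it is in hand every verification above is a one-line diagram chase. I would present the object-map computation in full since it is the load-bearing one, and treat the arrow-map and functoriality axioms as routine given it.
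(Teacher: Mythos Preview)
Your proposal is correct and follows essentially the same approach as the paper. The paper's proof is terser: it cites Majid's Proposition~2.3.8 wholesale for the fact that $(A,m_\chi)\in\Alg(H_\chi\Mod)$ (absorbing your object-map computation, unitality, and associativity into that citation), and then gives exactly your arrow-map argument --- that $\phi\otimes\phi$, being an $H\otimes H$-module morphism, commutes with the action of $\chi^{-1}$, so $\phi$ intertwines the twisted products --- leaving the identity/composition axioms implicit since $\phi_\chi=\phi$ on the nose.
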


\begin{proof}
That $(A,m_\chi)$ is in $\Alg(H_\chi\Mod)$ is \cite[Proposition 2.3.8]{majid_1995}, so we only need to check functoriality. Since $A\otimes A\xrightarrow{\phi\otimes\phi}B\otimes B$ 
is an $H\otimes H$-module morphism, it commutes with the action of $\chi^{-1}\in H\otimes H$, so 
$\phi\circ m_\chi = m\circ(\phi\otimes\phi) (\chi^{-1} \rhd)=m(\chi^{-1} \rhd) (\phi\otimes\phi)$ shows that $\phi_\chi$ is an algebra morphism if $\phi$ is. Also, $\phi_\chi$ is an $H_\chi$-module morphism because the actions of $H$ and of $H_\chi$ are the same. Therefore, $\phi_\chi$ is indeed an arrow in $\Alg(H_\chi\Mod)$.
\end{proof}
\begin{remark}
The functor given in Proposition~\ref{prop:twist_is_functorial} is 
essentially the restriction of the monoidal equivalence 
of categories $H\Mod \to H_\chi\Mod$, see  \cite[Remark 5.14.3]{etingof2016tensor}, to algebras.
\end{remark}

%\begin{cor}
%	If $\mathcal R$ is a quasitriangular structure on $H$, then $\mathcal R$ is a $2$-cocycle.
%	The triple product $m_{\mathcal R}\colon A_{\mathcal R}^{\otimes 3}\to A_{\mathcal R}$ is given by 
%	$m_{\mathcal R} = m\circ (\QYB(\CR^{-1})\rhd)$.
%\end{cor}

\subsection{The cocycle $\mathcal F$.}\label{subsec:cocycle_F}

We will now define the cocycle $\mathcal F$ which will be used for twisting in the rest of the paper. Let $T$ be the abelian group
$$
T=\langle \gamma_1,\dots,\gamma_n \mid \gamma_i^2=1, 
\,\gamma_i\gamma_j = \gamma_j\gamma_i,\,i,j\in [n]\rangle,
$$
isomorphic to $T(2,1,n)\cong (C_2)^n$. Define 
\begin{equation*} % \label{eq:def_F}
f_{ij} = \frac{1}{2}\left(1\otimes 1 + \gamma_i\otimes 1 + 1\otimes \gamma_{j} - \gamma_i\otimes \gamma_j\right),
\qquad 
\mathcal F = \prod_{1\leq j < i \leq n} f_{ij}\,.
\end{equation*}
Let $a,b$ be elements of some associative algebra.
It is easy to check that 
\begin{equation}\label{eq:involutions}
	\text{if $a,b$ are commuting involutions, then }\frac12(1+a+b-ab)\text{ is an involution,}
\end{equation}
which implies that the $f_{ij}$ and $\mathcal F$ are pairwise commuting involutions in $\Cc T \otimes \Cc T$. That they are cocycles
follows from
\begin{lemma}
$\mathcal F$ and $f_{ij}$ for all $i,j$  are quasitriangular structures
on $\Cc T$.
\end{lemma}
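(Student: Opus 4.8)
The plan is to verify (QT1), (QT2) and invertibility, exploiting throughout that $T$ is abelian. Invertibility is already in hand: the excerpt records that each $f_{ij}$ and $\mathcal F$ is an involution, hence its own inverse. For (QT1), observe that $\Cc T$ is cocommutative, so $\triangle^{\mathrm{op}}=\triangle$, and $\Cc T\otimes\Cc T$ is commutative, so every element of it commutes with $\triangle(x)$ for all $x\in\Cc T$; thus (QT1) holds automatically, for $f_{ij}$, for $\mathcal F$, and indeed for any element of $\Cc T\otimes\Cc T$. So the whole content of the lemma is the two identities in (QT2).

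For those I would introduce the orthogonal idempotents $e_i^{\pm}=\tfrac12(1\pm\gamma_i)$ in $\Cc T$: they satisfy $e_i^{+}+e_i^{-}=1$, $(e_i^{\pm})^2=e_i^{\pm}$, $e_i^{+}e_i^{-}=0$, commute for distinct $i$, and have coproducts $\triangle(e_i^{+})=e_i^{+}\otimes e_i^{+}+e_i^{-}\otimes e_i^{-}$ and $\triangle(e_i^{-})=e_i^{+}\otimes e_i^{-}+e_i^{-}\otimes e_i^{+}$. In these terms $f_{ij}=e_i^{+}\otimes 1+e_i^{-}\otimes\gamma_j$, and substituting the coproduct formulas and using $\gamma_j^2=1$ gives in a couple of lines $(\triangle\otimes\id)f_{ij}=(f_{ij})_{13}(f_{ij})_{23}$ and $(\id\otimes\triangle)f_{ij}=(f_{ij})_{13}(f_{ij})_{12}$. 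Hence every $f_{ij}$ (for all $i,j\in[n]$) is a QT-structure.

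It remains to pass from the $f_{ij}$ to $\mathcal F=\prod_{j<i}f_{ij}$. I would use that a product of pairwise-commuting QT-structures on a commutative Hopf algebra is again one: if $\CR,\CR'$ are QT-structures all of whose legs lie in the commutative algebra $\Cc T$, then $(\triangle\otimes\id)(\CR\CR')=\CR_{13}\CR_{23}\CR'_{13}\CR'_{23}=\CR_{13}\CR'_{13}\CR_{23}\CR'_{23}=(\CR\CR')_{13}(\CR\CR')_{23}$, interchanging $\CR_{23}$ and $\CR'_{13}$ by commutativity, and symmetrically for the other identity; invertibility and (QT1) are clearly inherited. Since the $f_{ij}$ pairwise commute, induction on the number of factors yields the claim for $\mathcal F$. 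As an alternative self-contained route one can compute $\mathcal F=\sum_{\vec\epsilon}e_{\vec\epsilon}\otimes g_{\vec\epsilon}$, where $\vec\epsilon$ runs over sign vectors indexed by $\{2,\dots,n\}$, $e_{\vec\epsilon}=\prod_{i\ge 2}e_i^{\epsilon_i}$ is a complete family of orthogonal idempotents with $\triangle(e_{\vec\epsilon})=\sum_{\vec\delta}e_{\vec\delta}\otimes e_{\vec\delta\vec\epsilon}$, and $g_{\vec\epsilon}=\prod_{i:\epsilon_i=-}\gamma_1\cdots\gamma_{i-1}$; because each $\gamma_1\cdots\gamma_{i-1}$ squares to $1$, the map $\vec\epsilon\mapsto g_{\vec\epsilon}$ is a homomorphism $(C_2)^{n-1}\to T$, and (QT2) for $\mathcal F$ then drops out of this homomorphism property together with orthogonality and completeness of the $e_{\vec\epsilon}$.

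Nothing here is deep; the only thing demanding care is the triple-tensor bookkeeping — getting $\triangle(e_i^{\pm})$ and the re-indexing of the three legs right — and, for the product route, confirming that $\prod_{j<i}f_{ij}$ may be reorganised freely, which is exactly the pairwise commutativity already noted before the lemma. So I expect no genuine obstacle, only a careful but routine computation.
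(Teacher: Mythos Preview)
Your proposal is correct and follows essentially the same route as the paper: (QT1) is dismissed as vacuous by commutativity and cocommutativity of $\Cc T$, (QT2) is verified for each $f_{ij}$ by a short direct computation, and then one passes to $\mathcal F$ by the observation that (QT2) is multiplicative. The only cosmetic difference is that the paper rewrites $f_{ij}=2^{-1}\sum_{a,b=0}^{1}(-1)^{ab}\gamma_i^{a}\otimes\gamma_j^{b}$ and cites Majid's Example~2.1.6 for the (QT2) check, whereas you reparametrise via the idempotents $e_i^{\pm}$; both amount to the same two-line calculation, and your explicit product argument is exactly what the paper summarises as ``(QT2) is multiplicative in $\mathcal R$''.
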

\begin{proof} (QT1) is vacuous as $\Cc T$ is a commutative and cocommutative Hopf algebra.
Rewriting $f_{ij}$ in the form $2^{-1}\sum_{a,b=0}^1 (-1)^{ab}\gamma_i^a\otimes \gamma_j^b$, 
one checks (QT2) for $f_{ij}$ in the same way as in the case $n=2$ of \cite[Example 2.1.6]{majid_1995}.
Since (QT2) is multiplicative in $\mathcal R$, (QT2) also holds for $\mathcal F$.
\end{proof}
%
%\nt 
%Consider the following idempotents in $\Cc T$,
%$$
%p_i:=\frac12(1+\gamma_i),\qquad q_i:=\frac12(1-\gamma_i)
%$$
%which, when acting on a $\Cc T$-module, project onto the eigenspaces of $\gamma_j$ with eigenvalues $1$ (i.e.\ the invariant subspace) and $-1$ respectively. The quasitriangular structures $f_{ij}$ can now alternatively be expressed as, [\textit{YB: unnecessary?}]
%\begin{equation}\label{eq:idempotent_formula}
%	f_{ij}=p_i\otimes 1 + q_i\otimes \gamma_j =1\otimes p_j+\gamma_i\otimes q_j
%\end{equation}
%which implies that 
%\begin{align}\label{eq:idempotent-left}
%\mathcal F &= \prod_{i=2}^{n} f_{i,-} &&\text{with } f_{i,-} = p_i\otimes 1 + q_i \otimes \gamma_1\dots \gamma_{i-1}
%\\
%\label{eq:idempotent-right} 
%&= \prod_{j=1}^{n-1} f_{-,j}&&\text{with } f_{-,j} = 1\otimes p_j + \gamma_{j+1}\dots \gamma_n\otimes q_j,
%\end{align}
%since $p_i$ and $q_i$ are orthogonal idempotents.

\subsection{$\mathcal F$-twisted product of $T$-eigenvectors.}

Since $T$ is a finite abelian group, a $\Cc T$-module is the same as a comodule 
for the dual Hopf algebra $\Cc \hat T$, 
where the dual group $\hat T$ of $T$ is
$$
\hat{T}=\{\alpha:T\rt \Cc^\times \mid \alpha \text{ is a group homomorphism}\} = 
\langle \alpha_1,\dots,\alpha_n \rangle,
\qquad \alpha_i(\gamma_j) = (-1)^{\delta_{ij}}.
$$
A coaction by the group algebra of~$\hat T$ manifests itself as a $\hat T$-grading, so for a $\Cc T$-module algebra $A$ we have $A=\bigoplus_{\alpha\in \hat{T}}A_\alpha$ 
where $A_\alpha:=\{a\in A\mid \forall \gamma\in T,\ \gamma\rhd a=\alpha(\gamma)a\}$ is the $\alpha$-eigenspace of $T$. 
Denote by $\star$ 
the twisted product on $A$ induced by the cocycle $\mathcal{F}
\in \Cc T\otimes \Cc T$ as in Proposition~\ref{prop:twist_is_functorial}. Then 
\begin{equation}\label{eq:prod_eigenvectors}
a\star b=(\alpha\otimes \beta)(\mathcal{F}^{-1})a\cdot b
\qquad\text{for $a\in A_\alpha$, $b\in A_\beta$.} 
\end{equation}
Here $\alpha,\beta\in \hat{T}$ induce an algebra homomorphism $\alpha\otimes \beta\colon \Cc T^{\otimes 2}\rt \Cc$. Observe that 
\begin{equation}\label{eq:sign_rule}
(\alpha_i \otimes \alpha_j)(f_{ab}) = 1\text{ if $(i,j)\ne(a,b)$},
\qquad (\alpha_i \otimes \alpha_j)(f_{ij}) = -1.
\end{equation}
Every one of the $2^n$ elements of $\hat T$ is of the form 
$$
\alpha_I := \prod_{i\in I} \alpha_i \qquad\text{where }
I\subseteq [n],
$$
so \eqref{eq:prod_eigenvectors} and \eqref{eq:sign_rule} imply the following
%
%
%
%Further, noticing that $q_j=1-p_j$, we deduce from~\eqref{eq:idempotent_formula}:
%\begin{lemma}\label{twist_trick} 
%Let $V$ be a $\Cc T$-module and $a,b\in V$. If either (or both) of the following holds: $\gamma_i\rhd a=a$, or $\gamma_j\rhd b=b$; then $f_{ij}\rhd a\otimes b=a\otimes b$.\qed
%\end{lemma}
%\begin{cor}\label{cor:twist_trick}
%Let $V$ be a $\Cc T$-module algebra with product $\cdot$ and $(V,\star)$ be its Drinfeld twist with respect to $\mathcal{F}$. 
%Suppose that $a,b\in V$ are such that, for all $j<i$, either $\gamma_i\rhd a=a$ or $\gamma_j\rhd b=b$. 
%Then $\mathcal{F}\rhd (a\otimes b)=a\otimes b$ and $a\star b=ab$.\qed 
%\end{cor}	
\begin{lemma}\label{lem:star_eigenvectors}
Let $I,J$ be subsets of $[n]$.
If $a\in A_{\alpha_I}$, $b\in A_{\alpha_J}$, then 
$a\star b = (-1)^{d(I,J)} ab$ where 
$d(I,J)=|\{(i,j):\, i\in I,\, j\in J,\, i>j\}|$. \qed
\end{lemma}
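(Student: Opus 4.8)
The plan is to unwind the formula \eqref{eq:prod_eigenvectors} for $a\star b$ using the explicit factorisation $\mathcal F=\prod_{1\le j<i\le n}f_{ij}$ and the fact that $\alpha_I\otimes\alpha_J\colon \Cc T^{\otimes 2}\to\Cc$ is an algebra homomorphism, and then to identify the resulting sign as $(-1)^{d(I,J)}$. First I would observe that, by \eqref{eq:involutions}, $\mathcal F$ is an involution, so $\mathcal F^{-1}=\mathcal F$; hence \eqref{eq:prod_eigenvectors} (with $\alpha=\alpha_I$, $\beta=\alpha_J$) reads $a\star b=(\alpha_I\otimes\alpha_J)(\mathcal F)\,ab$. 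Since the $f_{ij}$ pairwise commute and $\alpha_I\otimes\alpha_J$ is multiplicative, this equals $\bigl(\prod_{1\le j<i\le n}(\alpha_I\otimes\alpha_J)(f_{ij})\bigr)\,ab$.

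Next I would evaluate each factor $(\alpha_I\otimes\alpha_J)(f_{ij})$. Writing $\alpha_I=\prod_{k\in I}\alpha_k$ and using $\alpha_k(\gamma_\ell)=(-1)^{\delta_{k\ell}}$, we get $\alpha_I(\gamma_i)=-1$ if $i\in I$ and $+1$ otherwise, and similarly $\alpha_J(\gamma_j)=-1$ iff $j\in J$. Substituting into $f_{ij}=\tfrac12\bigl(1\otimes 1+\gamma_i\otimes 1+1\otimes\gamma_j-\gamma_i\otimes\gamma_j\bigr)$ — or, more economically, combining the multiplicativity just used with \eqref{eq:sign_rule} — yields $(\alpha_I\otimes\alpha_J)(f_{ij})=-1$ precisely when $i\in I$ and $j\in J$, and $=+1$ in all other cases. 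Therefore the product over the index set $\{(i,j):1\le j<i\le n\}$ picks up a factor $-1$ for exactly the pairs with $i\in I$, $j\in J$ and $i>j$, so $(\alpha_I\otimes\alpha_J)(\mathcal F)=(-1)^{|\{(i,j):\,i\in I,\,j\in J,\,i>j\}|}=(-1)^{d(I,J)}$, which is the assertion.

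The computation is entirely routine. The only point requiring a little care is to make sure the ordering convention in the product defining $\mathcal F$ (the factor $f_{ij}$ has its first leg indexed by the \emph{larger} of the two indices) is matched correctly with the strict inequality $i>j$ in the definition of $d(I,J)$, so that each contributing pair is counted exactly once; beyond this bookkeeping I do not anticipate any obstacle.
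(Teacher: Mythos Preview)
Your argument is correct and is exactly the unwinding the paper has in mind: the paper treats the lemma as an immediate consequence of \eqref{eq:prod_eigenvectors} and \eqref{eq:sign_rule} (together with $\mathcal F^{-1}=\mathcal F$ and the multiplicativity of $\alpha_I\otimes\alpha_J$), and you spell out precisely these steps. The only minor remark is that the shortcut ``combining multiplicativity with \eqref{eq:sign_rule}'' is a bit loose, since $\alpha_I\otimes\alpha_J$ is not literally a product of the $\alpha_k\otimes\alpha_l$; your direct substitution $\alpha_I(\gamma_i)=(-1)^{[i\in I]}$, $\alpha_J(\gamma_j)=(-1)^{[j\in J]}$ into $f_{ij}$ is the clean justification.
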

%\nt Note that the number $d(I,J)$ in the Lemma can be calculated as follows: it is the number of strict inversions in the string formed by 
%writing the elements of $I$ in the increasing order, followed by 
%the elements of $J$ in the increasing order. 

\begin{remark}\label{rem:clifford}
In fact, Lemma~\ref{lem:star_eigenvectors} is a particular case of the construction in \cite[Lemma 3.6]{bazlov2013cocycle} where a twist $A_{\mathcal F}$ of a $G$-graded algebra $A$ by a $2$-cocycle $\mathcal F$ on the group $G$ is realised as the image of the coaction $A \to A\otimes \Cc G$ viewed as a 
subalgebra of $A\otimes (\Cc G)_{\mathcal F}$. In the case $G=\hat T$ and the cocycle $\mathcal F$ given above, the twisted group algebra $(\Cc \hat T)_{\mathcal F}$ is isomorphic to the complex Clifford algebra $\mathit{Cl}_n$ of a space with an orthonormal basis $\alpha_1,\dots,\alpha_n$, \cite[Example 1.7]{bazlov2013cocycle}. The calculations done in the next section can be interpreted as embedding the negative braided Cherednik algebra $\underline H_{\underline c}(\mu(G(m,p,n)))$ in $H_c(G(m,p,n))\otimes \mathit{Cl}_n$, although we do not explicitly write the Clifford algebra generators. %[\textit{YB/EJH Reference Khongsap-Wang here: \cite{khongsap2008hecke}}]
\end{remark}

%---------------------------------------------------

\section{The Main Result}\label{main_result_sec}

\subsection{Statement of the main theorem.}
To state Main Theorem~\ref{main_result} below, we 
need to define the action of the abelian group $T$ 
on the rational Cherednik algebra $H_c(G(m,p,n))$. 
This will allow us to twist the associative product 
in $H_c(G(m,p,n))$.
\begin{proposition}\label{prop:action}
Let $m$ be even and $n\geq 2$. The rational Cherednik algebra $H_c(G(m,p,n))$ is a $\Cc T$-module algebra with respect to the action $\rhd$ given by:
$$\gamma_i\rhd g=t_i^{(-1)}gt_i^{(-1)}\qquad \gamma_i\rhd x_j=t_i^{(-1)}(x_j),\qquad \gamma_i\rhd y_j=t_i^{(-1)}(y_j)$$
for all $i,j\in [n],\ g\in G(m,p,n)$.
\begin{proof} 

The	PBW theorem~\ref{thm:pbw_rational} and the defining relations in Definition~\ref{def:RCA} imply that
$H_c(G(m,p,n))$ embeds as a subalgebra in 
$\widetilde H = H_{\widetilde c}(G(m,1,n))$, where 
$\widetilde c$ is defined by $\widetilde c_1=c_1$, 
$\widetilde c_\zeta = c_\zeta$ for $\zeta \in C_{\frac mp}
\setminus \{1\}$ and $\widetilde c_\zeta = 0$
whenever $\zeta\in C_m\setminus C_{\frac mp}$.
This subalgebra is generated by $x_i$, $y_i$ for $i\in [n]$ and by the subgroup $G(m,p,n)$ of $\widetilde G = G(m,1,n)$.
Observe that $\widetilde H$ is a $\Cc \widetilde G$-module algebra 
where $a\in \widetilde G$ acts by conjugation, 
carrying $u\in \widetilde H$ to $aua^{-1}\in \widetilde H$.
%
%Indeed:
%
%(1) the map $\widetilde G \to \End(\widetilde H)$ is multiplicative:
%$\forall a,b\in \widetilde G$, 
%$(ab)\rhd u = (ab)u(ab)^{-1}$ and 
%$a\rhd(b\rhd u) = a(bub^{-1})a^{-1}$ which are equal by associativity of the algebra.
%
%(2) $a\rhd (uv) = a(uv)a^{-1} = 
%au1va^{-1} = aua^{-1}ava^{-1} = (a\rhd u)(a\rhd v)$.
%
Since $G(m,p,n)$ is a normal subgroup of $\widetilde G$, this action of $\widetilde G$ preserves 
the subalgebra $H_c(G(m,p,n))$ of $\widetilde H$. 
Now the embedding of $T$ in $\widetilde G$ via $\gamma_i\mapsto t_i^{(-1)}$ 
defines the $T$-action on $H_c(G(m,p,n))$ given in the Proposition.
\end{proof}
\end{proposition}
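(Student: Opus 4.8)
The plan is to realize the prospective action of $T$ as conjugation by diagonal sign matrices inside a \emph{larger} rational Cherednik algebra, where well-definedness of the action is automatic, and then to transport it down to $H_c(G(m,p,n))$ along an embedding coming from the PBW theorem.

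Concretely, I would set $\widetilde H := H_{\widetilde c}(G(m,1,n))$, where the parameter $\widetilde c$ equals $c$ on the reflection classes that already occur in $G(m,p,n)$ --- the $s_{ij}^{(\epsilon)}$ and the $t_i^{(\zeta)}$ with $\zeta\in C_{m/p}$ --- and equals $0$ on the remaining reflections $t_i^{(\zeta)}$ with $\zeta\in C_m\setminus C_{m/p}$; this $\widetilde c$ is conjugation-invariant on $G(m,1,n)$, so $\widetilde H$ is defined. Comparing the defining relations in Definition~\ref{def:RCA} for $H_c(G(m,p,n))$ and for $\widetilde H$, every relation of the former holds among the images of the $x_i$, $y_i$ and the elements of $G(m,p,n)\subseteq G(m,1,n)$ in $\widetilde H$ --- the extra reflections $t_i^{(\zeta)}$ never appear because $\widetilde c_\zeta = 0$ there. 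Hence there is an algebra homomorphism $\iota\colon H_c(G(m,p,n))\to\widetilde H$ fixing the generators, and by the PBW theorem~\ref{thm:pbw_rational} for both algebras $\iota$ sends the PBW basis $\{x_1^{k_1}\cdots x_n^{k_n}\,g\,y_1^{l_1}\cdots y_n^{l_n}\mid g\in G(m,p,n)\}$ injectively into the PBW basis of $\widetilde H$; so $\iota$ is injective and identifies $H_c(G(m,p,n))$ with a subalgebra of $\widetilde H$.

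Next I would use that each $a\in G(m,1,n)$ is a unit of $\widetilde H$, so conjugation $u\mapsto aua^{-1}$ is an algebra automorphism of $\widetilde H$ and $a\mapsto(u\mapsto aua^{-1})$ is a group homomorphism, making $\widetilde H$ a $\Cc G(m,1,n)$-module algebra. One checks that $G(m,p,n)$ is normal in $G(m,1,n)$: it is the kernel of the homomorphism $G(m,1,n)\to C_m/C_{m/p}$ taking $wt$ (with $w\in\mathbb S_n$, $t\in\mathbb T_n$) to $\det(t)\bmod C_{m/p}$. Since $m$ is even, $t_i^{(-1)}\in T(m,1,n)\subseteq G(m,1,n)$ even in the cases where it does not lie in $G(m,p,n)$; by normality, conjugation by $t_i^{(-1)}$ carries $x_j\mapsto t_i^{(-1)}(x_j)\in V$, $y_j\mapsto t_i^{(-1)}(y_j)\in V^*$, and $g\mapsto t_i^{(-1)}g t_i^{(-1)}\in G(m,p,n)$, hence preserves the subalgebra $H_c(G(m,p,n))$ and restricts on it to precisely the operator $\gamma_i\rhd(-)$ of the statement (here one uses $t_i^{(-1)}x\,t_i^{(-1)}=t_i^{(-1)}(x)$ in $\widetilde H$ for $x\in V$, and likewise on $V^*$).

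Finally, $\gamma_i\mapsto t_i^{(-1)}$ defines an injective group homomorphism $T\hookrightarrow G(m,1,n)$ (the $t_i^{(-1)}$ are independent commuting involutions, generating a copy of $(C_2)^n$), so restricting the $\Cc G(m,1,n)$-module algebra structure of $\widetilde H$ along $\Cc T\to\Cc G(m,1,n)$ and then to the subalgebra $H_c(G(m,p,n))$ produces exactly the $\Cc T$-module algebra structure in the proposition. The step I expect to require the most care is the descent to the subalgebra: this is where the PBW theorem is genuinely needed, and where one must use that $t_i^{(-1)}$ normalizes $G(m,p,n)$ within $G(m,1,n)$ despite not always being an element of it. As an alternative one could avoid $\widetilde H$ altogether and verify directly that each $\gamma_i\rhd(-)$ respects all defining relations of $H_c(G(m,p,n))$ and that the $\gamma_i\rhd(-)$ are pairwise commuting involutory algebra automorphisms, but checking the mixed relations $y_ix_j-x_jy_i=\kappa_c(y_i,x_j)$ term by term makes that route noticeably more computational.
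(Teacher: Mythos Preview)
Your proposal is correct and follows essentially the same route as the paper: embed $H_c(G(m,p,n))$ into $\widetilde H=H_{\widetilde c}(G(m,1,n))$ via the PBW theorem, let $G(m,1,n)$ act on $\widetilde H$ by conjugation, use normality of $G(m,p,n)$ in $G(m,1,n)$ to see this action preserves the subalgebra, and then restrict along $T\hookrightarrow G(m,1,n)$, $\gamma_i\mapsto t_i^{(-1)}$. Your write-up is in fact more explicit than the paper's in justifying injectivity of the embedding and the normality of $G(m,p,n)$.
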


\nt %Using this result, and the cocycle $\mathcal{F}$ from the previous section, we can 
We now twist the rational Cherednik algebra by the cocycle $\mathcal{F}\in \Cc T \otimes \Cc T$ from Section~\ref{drinfeld_twist_sec} and  denote the result $(H_c(G(m,p,n)),\star)$.

\begin{theorem}\label{main_result} 
%For $m$ even and $n\geq 2$, 
There exists an isomorphism
$$\phi \colon \underline H_{\underline{c}}(\mu(G(m,p,n))) \to (H_c(G(m,p,n)),\star)$$
of associative algebras, where $\underline{c}:\underline{S}\rt \Cc$ is defined by $\underline{c}_1:=-c_1,\ \underline{c}_\zeta:=-c_\zeta\ \forall \zeta\in C_{\frac{m}{p}}\backslash \{1\}$, and 
$$
\phi(\sigma_i)=\bar s_i,\qquad
\phi(t) = t,\qquad
\phi({\ux}_j) = x_j, \qquad 
\phi(\uy_j)=y_j, 
$$
for all $i\in [n-1]$, $j\in [n]$, $\sigma_i:=\sigma_{i,i+1}^{(1)}$, $\bar s_i:=s_{i,i+1}^{(-1)}$, $t\in T(m,p,n)$.
\end{theorem}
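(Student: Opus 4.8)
The plan is to construct the map $\phi$ explicitly on generators, verify it respects all defining relations of $\underline H_{\underline c}(\mu(G(m,p,n)))$ when the target is equipped with the twisted product $\star$, and then argue bijectivity by a PBW/dimension comparison. The crucial preliminary observation is that $V$, $V^*$ and the group elements sit in $T$-eigenspaces of $H_c(G(m,p,n))$: under the action of Proposition~\ref{prop:action}, we have $x_j \in (H_c)_{\alpha_{\{j\}}}$ and $y_j \in (H_c)_{\alpha_{\{j\}}}$ (since $t_j^{(-1)}$ negates exactly $x_j$, resp. $y_j$), while $\bar s_i = s_{i,i+1}^{(-1)}$ lies in the eigenspace $(H_c)_{\alpha_{\{i,i+1\}}}$, because conjugating $s_{i,i+1}^{(-1)}$ by $t_k^{(-1)}$ gives $s_{i,i+1}^{(-1)}$ if $k \ne i,i+1$ and $s_{i,i+1}^{((-1)\cdot(-1)^2)} \cdot(\pm)$... more precisely one computes $t_i^{(-1)} s_{i,i+1}^{(-1)} t_i^{(-1)} = s_{i,i+1}^{(-1)}$ up to the correct sign, so that $\bar s_i$ is a $\pm1$-eigenvector for each $\gamma_k$, with eigenvalue $-1$ exactly for $k\in\{i,i+1\}$. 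Granting this, Lemma~\ref{lem:star_eigenvectors} reduces every $\star$-product of generators to the ordinary product times an explicit sign $(-1)^{d(I,J)}$.

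Next I would check the relations one family at a time. For the relations among $x$'s and $y$'s: since $x_i, x_j$ both live in singleton eigenspaces, $x_i \star x_j = (-1)^{d(\{i\},\{j\})} x_i x_j$, and $d(\{i\},\{j\}) = 1$ iff $i > j$; combined with $x_ix_j = x_jx_i$ in $H_c$ this yields $x_i\star x_j = -\, x_j \star x_i$ for $i\ne j$, i.e. the anticommutation relation $\ux_i\ux_j + \ux_j\ux_i = 0$; likewise for the $y$'s. For the cross relations $\uy_i\ux_j + \ux_j\uy_i = \underline c_1 \sum_\epsilon \epsilon\,\sigma_{ij}^{(\epsilon)}$ and the diagonal one, I would expand $y_i \star x_j$ and $x_j \star y_i$ using Lemma~\ref{lem:star_eigenvectors}, use $y_ix_j - x_jy_i = \kappa_c(y_i,x_j)$ from Definition~\ref{def:RCA}, and match terms; here the key identity to establish is $\phi(\sigma_{ij}^{(\epsilon)}) = s_{ij}^{(\epsilon')}$ (with $\sigma_{ij}^{(\epsilon)} = \sigma_{i,i+1}^{(1)}$-conjugates rewritten via the presentation Theorem~\ref{mystic_presentation}, $\sigma_{ij}^{(\epsilon)} = \sigma_{ij} t_i^{(\epsilon)}t_j^{(\epsilon^{-1})}$) and to track how the sign $\epsilon$ versus $-\epsilon^{-1}$ in \eqref{sigma_map} versus the complex-reflection convention interacts with the sign $-c_1$ vs $c_1$ and with the $\star$-twist sign; this bookkeeping, together with the $t_i^{(\zeta)}$ terms, is where the constants $\underline c_1 = -c_1$, $\underline c_\zeta = -c_\zeta$ get forced. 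The group-to-group relations $g\ux_i = g(\ux_i)g$ etc. follow because the $T$-action on $H_c$ is by conjugation, so the $\star$-twist of $g \cdot x_i$ differs from $g x_i$ by a sign that is exactly absorbed into the matrix coefficients of $g(x_i)$.

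The genuinely hard part is relations (i), (ii), (iii) of Theorem~\ref{mystic_presentation} for the $\sigma_i$ --- in particular the braid relation $\sigma_i\sigma_{i+1}\sigma_i = \sigma_{i+1}\sigma_i\sigma_{i+1}$, which must be shown to survive the twist $\bar s_i \star \bar s_{i+1} \star \bar s_i = \bar s_{i+1} \star \bar s_i \star \bar s_{i+1}$. Expanding both sides with Lemma~\ref{lem:star_eigenvectors}: each $\bar s_k$ has eigenspace label $\{k,k+1\}$, but after multiplying two of them the product lands in the eigenspace labelled by the symmetric difference, so one must carefully compute $d(\cdot,\cdot)$ for the nested triple products, and the relevant fact is precisely the Clifford Braiding Theorem alluded to in the introduction (Remark~\ref{rem:clifford}): in $H_c(G(m,p,n))\otimes \mathit{Cl}_n$ the elements $\bar s_i \otimes (\alpha_i\alpha_{i+1})/\sqrt{\pm1}$ (or the appropriate Clifford-algebra dressing) satisfy the ordinary braid relation iff the signs work out, and this is a finite sign computation that I expect reduces to checking $d(I, I\triangle J) + d(I\triangle J, \ldots)$ parities for $I = \{i,i+1\}$, $J = \{i+1,i+2\}$. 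I would handle relation (i), $\sigma_i^2 = t_i^{(-1)}t_{i+1}^{(-1)}$, by noting $\bar s_i \star \bar s_i = (-1)^{d(\{i,i+1\},\{i,i+1\})}\bar s_i^2$ with $d(\{i,i+1\},\{i,i+1\}) = 1$ (the single pair $(i+1,i)$ with $i+1>i$), so $\bar s_i\star\bar s_i = -\,(s_{i,i+1}^{(-1)})^2 = -\,\mathrm{id}$... which needs to be reconciled with the target $t_i^{(-1)}t_{i+1}^{(-1)}$ --- and indeed $\phi(t_i^{(-1)}t_{i+1}^{(-1)}) = t_i^{(-1)}t_{i+1}^{(-1)}$, so one checks $(s_{i,i+1}^{(-1)})^2 = \mathrm{id}$ is wrong and instead $s_{i,i+1}^{(-1)}$ has order... here I would recompute: $\sigma_{i,i+1}^{(1)}$ has order $4$ with $\sigma_{i,i+1}^{(1)2} = t_i^{(-1)}t_{i+1}^{(-1)}$ already in $\mu(G(m,p,n))$, and $\phi$ should send this consistently, so the sign from $\star$ must combine with $(\bar s_i)^2$ in $H_c$ to give $t_i^{(-1)}t_{i+1}^{(-1)}$; getting this exactly right pins down $\bar s_i = s_{i,i+1}^{(-1)}$ as the correct image rather than $s_{i,i+1}^{(1)}$. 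Relation (iii) is the conjugation action of $\sigma_i$ on $T(m,p,n)$, which follows from the conjugation formula in $H_c$ plus the observation that elements of $T(m,p,n)$ are killed or preserved by each $\gamma_k$ (they lie in the trivial or sign eigenspaces), so the $\star$-twist contributes at most signs that cancel. Finally, having shown $\phi$ is a well-defined algebra homomorphism, I would invoke the PBW theorem for $\underline H_{\underline c}$ (\cite[Theorem 0.2]{Bazlov2008NoncommutativeDO}) on the source and Theorem~\ref{thm:pbw_rational} on the target: $\phi$ sends the PBW basis $\{\ux^k\, g\, \uy^l\}$ to $\{x^k\, g\, y^l\}$ up to nonzero scalars (the $\star$-signs), hence is a bijection. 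Since the introduction announces a \emph{new} proof of the PBW theorem for $\underline H_{\underline c}$, I would instead only use surjectivity of $\phi$ plus the target PBW, deducing the source PBW as a corollary; the source relations give an upper bound on the size of each graded piece matching the target, and surjectivity forces equality, yielding both the isomorphism and the PBW theorem simultaneously.
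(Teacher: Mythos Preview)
Your overall strategy matches the paper's, but there is a genuine gap in the treatment of the group elements. You assert that $\bar s_i = s_{i,i+1}^{(-1)}$ lies in the eigenspace $(H_c)_{\alpha_{\{i,i+1\}}}$, i.e.\ that $\gamma_i\rhd \bar s_i = -\bar s_i$. This is false: a direct computation gives $t_i^{(-1)} s_{i,i+1}^{(-1)} t_i^{(-1)} = s_{i,i+1}^{(1)} = s_i$, not $-\bar s_i$. Neither $s_i$ nor $\bar s_i$ is a $T$-eigenvector; rather, the eigenvectors are $s_i^\pm := \tfrac12(s_i \pm \bar s_i)$, with $s_i^+$ being $T$-invariant and $s_i^-$ having eigencharacter $\alpha_i\alpha_{i+1}$. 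Consequently Lemma~\ref{lem:star_eigenvectors} cannot be applied directly to products involving $\bar s_i$, and your computations of $\bar s_i \star \bar s_i$, of the braid relations, and of the semidirect product relations all collapse. Your own sanity check already detects a symptom: $(\bar s_i)^2 = \id$ is in fact correct, so your formula $\bar s_i\star\bar s_i = (-1)^{d}\bar s_i^2 = -\id$ would contradict the required target $t_i^{(-1)}t_{i+1}^{(-1)}$.

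The paper's fix is Lemma~\ref{lem:s_au}, which gives closed formulas
\[
s_i\star u = s_i\,(p_i\rhd u) + \bar s_i\,(q_i\rhd u),\qquad
u\star s_i = (p_{i+1}\rhd u)\,s_i + (q_{i+1}\rhd u)\,\bar s_i,
\]
with $p_i=\tfrac12(1+\gamma_i)$, $q_i=\tfrac12(1-\gamma_i)$; from these one deduces $s_i\star s_i = r_{i,i+1}$ and the $\star$-braid relations (Lemma~\ref{lem:braid_relations}), then transports everything to $\bar s_i$ via $\bar s_i = s_i r_{i,i+1}$ and Lemma~\ref{lem:starisdotfort}. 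The cross-relation step similarly requires an explicit formula $\phi(\sigma_{ij}^{(\epsilon)}) = s_{ij}^{(-\epsilon)}$ for $i<j$ and $s_{ij}^{(\epsilon)}$ for $i>j$, proved by induction using Lemma~\ref{lem:s_au}. Finally, your bijectivity sketch is missing one ingredient: since $\phi$ does not send group elements of $\mu(G)$ to group elements of $G$ but only to linear combinations, the paper constructs involutions $\theta_w\in\Cc T(2,1,n)$ with $\phi(w\theta_w t)=wt$ (Proposition~\ref{prop:theta_w}) and uses a $T$-eigenbasis of each coset span $\langle wT(m,p,n)\rangle$ to match the spanning set $\underline{\mathcal B}$ of $\underline H_{\underline c}$ with the PBW basis of $H_c$ up to signs; this is what allows the PBW theorem for $\underline H_{\underline c}$ to be deduced rather than assumed.
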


\subsection{Outline of the proof of the Theorem.}

We fix the triple $(m,p,n)$ and denote $G=G(m,p,n)$, $H_c=H_c(G)$, $\underline{H}_{\underline c}=\underline{H}_{\underline c}(\mu(G))$. 
%We also use the notation for the cocycles $f_{ij}$, $f_{i,-}$ and $f_{-,j}\in \Cc T\otimes \Cc T$ from \eqref{eq:idempotent_formula}, \eqref{eq:idempotent-left} and \eqref{eq:idempotent-right}. 
The Theorem defines the algebra homomorphism $\phi$ on generators of $\underline{H}_{\underline c}$, and so $\phi$, if exists, is unique. We first extend $\phi$ from the generators $\sigma_i$ and $t$ of $\mu(G)$ to %whole of $\mu(G)$, so that $\phi$ is 
a homomorphism from $\mu(G)$ to a subgroup of the twisted group algebra $(\Cc G,\star)$.
Then, by checking that the defining relations of $\underline{H}_{\underline c}$ from Definition~\ref{def:negative_braided} are satisfied by the elements $\phi({\ux}_i)$, $\phi(\uy_i)$ and $\phi(g)$, $g\in \mu(G)$ of the algebra $(H_c,\star)$, 
we show that $\phi$ extends from the generators to the whole algebra $\underline{H}_{\underline c}$. We then use the PBW theorem for $H_c$ to argue that $\phi$ is bijective.

\subsection{$\star$-multiplication by simple generators.}

We need several lemmas where we express 
the new, $\mathcal F$-twisted associative product $\star$ of certain elements of $H_c$ in terms of the usual product (written as $\cdot$ or omitted).
\begin{lemma}\label{lem:starisdotfort}
	For all $t\in T(m,p,n)$, $u\in H_c$, 
	$t\star u=tu$ and $u\star t=ut$. 
\end{lemma}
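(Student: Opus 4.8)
The plan is to use Lemma~\ref{lem:star_eigenvectors} together with the fact that every $t\in T(m,p,n)$ lies in the trivial eigenspace of the $T$-action. First I would observe that $T(m,p,n)\subseteq \mathbb T_n$ commutes with every $t_i^{(-1)}$ inside $\mathbb T_n$, so by the definition of the $\Cc T$-action in Proposition~\ref{prop:action} we have $\gamma_i\rhd t = t_i^{(-1)}\,t\,t_i^{(-1)} = t$ for all $i$; hence $t\in H_c$ is a $T$-invariant element, i.e.\ $t\in (H_c)_{\alpha_\emptyset}$ with $\alpha_\emptyset$ the trivial character.

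Next I would invoke formula \eqref{eq:prod_eigenvectors}: for $a\in (H_c)_\alpha$ and $b\in (H_c)_\beta$ one has $a\star b = (\alpha\otimes\beta)(\mathcal F^{-1})\,a\cdot b$. Take $a=t$, so $\alpha=\alpha_\emptyset$, and let $u\in H_c$ be arbitrary; decomposing $u=\sum_\beta u_\beta$ into its $T$-eigencomponents and using bilinearity of $\star$, it suffices to treat $u=b\in (H_c)_\beta$. Since $\alpha_\emptyset$ is the counit of $\Cc T$ (as a character it sends every group element to $1$), the counit axiom for the cocycle, $(\epsilon\otimes\id)(\mathcal F)=1=(\id\otimes\epsilon)(\mathcal F)$, gives $(\alpha_\emptyset\otimes\beta)(\mathcal F)=1$ and likewise for $\mathcal F^{-1}$. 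Therefore $t\star b = (\alpha_\emptyset\otimes\beta)(\mathcal F^{-1})\,t\cdot b = t\cdot b$, and summing over $\beta$ yields $t\star u = tu$. The identity $u\star t = ut$ follows symmetrically, using $(\id\otimes\epsilon)(\mathcal F)=1$ instead. Alternatively one can cite Lemma~\ref{lem:star_eigenvectors} directly with $I=\emptyset$: then $d(\emptyset,J)=0=d(J,\emptyset)$ for every $J$, so the sign is always $+1$.

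There is no real obstacle here; the only point requiring a line of care is the verification that $t$ is genuinely $T$-invariant, which reduces to the commutativity of diagonal matrices in $\mathbb T_n$ (equivalently, each $t_i^{(-1)}$ commutes with every element of $T(m,p,n)$, so conjugation acts trivially). Everything else is a direct application of the eigenspace formula for $\star$ and the normalisation (counit) axiom of the cocycle $\mathcal F$.
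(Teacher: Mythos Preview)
Your proof is correct and follows essentially the same approach as the paper: observe that $t\in T(m,p,n)$ is $T$-invariant (eigencharacter $\alpha_\emptyset$), then apply Lemma~\ref{lem:star_eigenvectors} with $I=\emptyset$ so that $d(\emptyset,J)=d(J,\emptyset)=0$, and extend to arbitrary $u$ by linearity. The only difference is that you spell out the $T$-invariance of $t$ via commutativity of diagonal matrices and offer the counit-axiom argument as an alternative, whereas the paper simply asserts invariance and invokes the lemma directly.
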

\begin{proof} 
As $t$ is invariant under the action of $T$,
i.e., is a $T$-eigenvector with eigencharacter $1=\alpha_{\emptyset}$, by Lemma~\ref{lem:star_eigenvectors}, $t\star u=tu$ and $u\star t=ut$ for all $T$-eigenvectors $u$, and so by linearity for all $u$ in $H_c$.
\end{proof}

\nt We denote
$$
s_i := s_{i,i+1} \in \mathbb S_n, 
%\quad t_i := t_i^{(-1)} \in T(2,1,n), 
\quad r_{ij} := t_i^{(-1)} t_j^{(-1)}\in T(m,p,n),
%\quad s_i^+ = \frac12(s_i+\bar s_i), \quad s_i^- = \frac12
%(s_i - \bar s_i).
\quad p_i =\frac12(1+\gamma_i), 
\quad q_i=\frac12(1-\gamma_i)
\in \Cc T
$$
and let 
$$
s_i^+ := p_i \rhd s_i = \frac12 (s_i + \bar s_i),
\quad 
s_i^- := q_i \rhd s_i = \frac12(s_i-\bar s_i) \in \Cc G.
$$
Observe that $\gamma_a \rhd s_i = s_i$ if $a\notin \{i,i+1\}$, so $s_i^+$ is $T$-invariant, and $s_i^-\in (H_c)_{\alpha_i \alpha_{i+1}}$. 

\begin{lemma}\label{lem:s_au}
For all $i\in [n-1]$, $u\in H_c$,
$$s_i \star u = s_i\cdot (p_i \rhd u) + \bar s_i \cdot (q_i \rhd u)
%and $s_i\star u = s_i\cdot (p_i \rhd u)+\bar s_i \cdot (q_i \rhd u)$
\qquad\text{and}\qquad 
u\star s_i = (p_{i+1}\rhd u) s_i+(q_{i+1}\rhd u)\bar s_i.
$$
\end{lemma}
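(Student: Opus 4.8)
The plan is to reduce the statement to the formula for $\star$-products of $T$-eigenvectors, namely Lemma~\ref{lem:star_eigenvectors} together with \eqref{eq:prod_eigenvectors}, applied to the two factors $s_i$ and $u$ (or $u$ and $s_i$). The element $s_i\in\Cc G\subseteq H_c$ is not itself a $T$-eigenvector, but it decomposes as $s_i=s_i^++s_i^-$, where $s_i^+=p_i\rhd s_i$ is $T$-invariant (eigencharacter $\alpha_\emptyset$) and $s_i^-=q_i\rhd s_i\in(H_c)_{\alpha_i\alpha_{i+1}}$, as noted just above the statement. So first I would write $s_i\star u = s_i^+\star u + s_i^-\star u$ and treat each summand by expanding $u$ into its $T$-homogeneous components $u=\sum_{I\subseteq[n]}u_I$ with $u_I\in(H_c)_{\alpha_I}$.

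For the $s_i^+$ term: since $s_i^+$ has trivial eigencharacter, Lemma~\ref{lem:starisdotfort}-style reasoning (equivalently Lemma~\ref{lem:star_eigenvectors} with $I=\emptyset$) gives $s_i^+\star u = s_i^+\cdot u = s_i^+ u$ for every $u$. For the $s_i^-$ term: $s_i^-\in(H_c)_{\alpha_{\{i,i+1\}}}$, so by Lemma~\ref{lem:star_eigenvectors}, $s_i^-\star u_I = (-1)^{d(\{i,i+1\},I)}s_i^-u_I$. The next step is to recognise that, by the semidirect-product relations in $H_c$, conjugation by $s_i^-$ (a scalar multiple of $\bar s_i = s_{i,i+1}^{(-1)}$) and the sign $(-1)^{d(\{i,i+1\},I)}$ combine exactly into $\bar s_i\cdot(\gamma_i\gamma_{i+1}\rhd u_I)$; more simply, I claim the signs assemble so that $s_i^+ u + \sum_I (-1)^{d(\{i,i+1\},I)} s_i^- u_I$ equals $s_i\cdot(p_i\rhd u)+\bar s_i\cdot(q_i\rhd u)$. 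Indeed, on $(H_c)_{\alpha_I}$, the operator $p_i\rhd(-)$ is the identity when $\gamma_i$ acts trivially and zero otherwise, and $q_i\rhd(-)$ is the complementary projection, and one checks $d(\{i,i+1\},I)$ is even precisely when $\gamma_i\gamma_{i+1}$ (hence $\gamma_i$, since $\bar s_i$ already accounts for swapping indices $i,i+1$) acts trivially on $u_I$ — so the sign $(-1)^{d}$ on the $s_i^-$ part matches switching between $s_i$ and $\bar s_i$. Assembling $s_i^\pm = \tfrac12(s_i\pm\bar s_i)$ then collapses the four terms into the two claimed ones.

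The second identity, $u\star s_i = (p_{i+1}\rhd u)s_i + (q_{i+1}\rhd u)\bar s_i$, is proved symmetrically: decompose $s_i = s_i^+ + s_i^-$ on the right, use \eqref{eq:prod_eigenvectors} with $u$ in the left slot, and note that now it is the eigencharacter read off in the \emph{second} tensor leg of $\mathcal F$ that matters, which by the ordering convention $\prod_{j<i}f_{ij}$ and \eqref{eq:sign_rule} produces $d(I,\{i,i+1\})$; the parity of this quantity is governed by whether $\gamma_{i+1}$ acts trivially on $u_I$ (again because $\bar s_i$ absorbs the transposition of $i,i+1$, so only one of the two indices contributes a genuine sign), giving the projections $p_{i+1},q_{i+1}$.

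I expect the main obstacle to be purely bookkeeping: getting the index $i$ versus $i+1$ correct in the two formulas and confirming that the combinatorial sign $(-1)^{d(\{i,i+1\},I)}$ (resp.\ $(-1)^{d(I,\{i,i+1\})}$) really does reduce to the single eigenvalue $\alpha_I(\gamma_i)$ (resp.\ $\alpha_I(\gamma_{i+1})$) rather than to $\alpha_I(\gamma_i\gamma_{i+1})$. The resolution is that $\bar s_i = s_i\cdot r_{i,i+1}$ already contributes the $\gamma_i\gamma_{i+1}$-twist, so the residual sign from $\mathcal F$ only needs to track a \emph{single} index; spelling this out carefully via \eqref{eq:sign_rule} and the product $\mathcal F = \prod_{j<i}f_{ij}$ is the one genuinely non-formal point. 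Everything else — the decomposition $s_i = s_i^+ + s_i^-$, linearity in $u$, and the final reassembly of $\tfrac12(s_i\pm\bar s_i)$ — is routine.
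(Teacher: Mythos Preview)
Your overall strategy matches the paper's: decompose $s_i=s_i^++s_i^-$ into its $T$-invariant and $\alpha_i\alpha_{i+1}$-homogeneous parts, apply Lemma~\ref{lem:star_eigenvectors}, and reassemble using $s_i^\pm=\tfrac12(s_i\pm\bar s_i)$. You also correctly identify the crux --- that $(-1)^{d(\{i,i+1\},I)}$ (resp.\ $(-1)^{d(I,\{i,i+1\})}$) should equal $\alpha_I(\gamma_i)$ (resp.\ $\alpha_I(\gamma_{i+1})$). However, your justification of this parity is wrong. The semidirect product relations in $H_c$, ``conjugation by $s_i^-$'', and the identity $\bar s_i=s_i r_{i,i+1}$ play no role whatsoever at this stage: the sign $(-1)^{d}$ comes purely from evaluating the cocycle $\mathcal F$ on the eigencharacters of $s_i^-$ and $u_I$, \emph{before} any product in $H_c$ is taken. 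Your assertion that ``$d(\{i,i+1\},I)$ is even precisely when $\gamma_i\gamma_{i+1}$ acts trivially on $u_I$'' is simply false --- take $I=\{i+1\}$: then $d(\{i,i+1\},I)=0$ but $\gamma_i\gamma_{i+1}$ acts by $-1$ --- and no ``$\bar s_i$ absorbing a twist'' can repair it, since $\bar s_i$ has not yet entered the computation.

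The correct argument is the elementary inversion count that the paper gives: for $j\in I$ with $j\ne i$, the set $\{i,i+1\}$ contributes either zero inversions (if $j\ge i+1$) or two (if $j<i$) against $j$, while $j=i$ contributes exactly one (from $i+1>i$). Hence $d(\{i,i+1\},I)\equiv[i\in I]\pmod 2$, so $s_i^-\star u = s_i^-\cdot(\gamma_i\rhd u)$. Combining this with $s_i^+\star u=s_i^+ u$ and rewriting $s_i^\pm=\tfrac12(s_i\pm\bar s_i)$ yields the first formula directly --- no appeal to the semidirect-product relations of $H_c$ is needed. The second identity is the symmetric inversion count, which indeed gives parity $[i+1\in I]$, matching $p_{i+1},q_{i+1}$ as you claim.
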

\begin{proof}
Since the expressions are linear in $u$, we may assume that $u$ is a $T$-eigenvector with eigencharacter $\alpha_J$, $J\subseteq [n]$. Apply Lemma~\ref{lem:star_eigenvectors} to $s_i^-\in (H_c)_{\alpha_i \alpha_{i+1}}$ and $u$. In the string $i, i+1, J$, the pair $i,i+1$ forms zero or two inversions with every element of $J$ except possibly $i$, and forms exactly one inversion with $i$ if $i\in J$, so $s_i^-\star u= s_i^- u$ if $i\notin J$, and 
$s_i^- \star u=-s_i^- u$ if $i\in J$. That is, $s_i^- \star u = s_i^- \cdot (\gamma_i \rhd u)$.\bb

\nt Also by Lemma~\ref{lem:star_eigenvectors},
$s_i^+\star u = s_i^+ u$. The formula for $s_i\star u=(s_i^++s_i^-)\star u$ follows. The proof for $u\star s_i$ is similar.
\end{proof}

\begin{lemma}\label{lem:braid_relations}
The simple transpositions $s_i$, $i\in [n-1]$, obey the relations $s_i\star s_i=r_{i,i+1}$ and the braid relations with respect to the $\star$-product on the group algebra of $G(m,p,n)$.
\end{lemma}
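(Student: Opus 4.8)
The plan is to verify the three required families of identities --- $s_i\star s_i=r_{i,i+1}$, the far commutation $s_i\star s_j=s_j\star s_i$ for $|i-j|>1$, and the length-three relation $s_i\star s_{i+1}\star s_i=s_{i+1}\star s_i\star s_{i+1}$ --- by reducing each $\star$-product to an ordinary product in $\Cc G$ via Lemma~\ref{lem:s_au}, and then simplifying inside the group $G$. The elementary facts we need follow at once from the explicit matrices: in $G$ one has $s_k^2=\bar s_k^2=1$ and $s_k\bar s_k=\bar s_k s_k=r_{k,k+1}$, equivalently $\bar s_k=s_kr_{k,k+1}=r_{k,k+1}s_k$; the element $r_{k,k+1}$ commutes with $s_k$ and squares to $1$; conjugation by $s_k$ interchanges $t_k^{(-1)}$ and $t_{k+1}^{(-1)}$, so for example $s_{i+1}r_{i,i+1}=r_{i,i+2}s_{i+1}$ and $s_ir_{i+1,i+2}=r_{i,i+2}s_i$; and $r_{i,i+2}r_{i+1,i+2}=r_{i,i+1}$ in the abelian group $T(m,1,n)$.

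The first identity is immediate from Lemma~\ref{lem:s_au}:
\[
s_i\star s_i=s_i\cdot(p_i\rhd s_i)+\bar s_i\cdot(q_i\rhd s_i)=s_is_i^++\bar s_is_i^-=\tfrac12\bigl(s_i(s_i+\bar s_i)+\bar s_i(s_i-\bar s_i)\bigr)=\tfrac12\bigl((1+r_{i,i+1})+(r_{i,i+1}-1)\bigr)=r_{i,i+1}.
\]
For the far commutation, when $|i-j|>1$ the index $i$ does not lie in the support $\{j,j+1\}$ of $s_j$, so $\gamma_i\rhd s_j=s_j$, hence $p_i\rhd s_j=s_j$ and $q_i\rhd s_j=0$; Lemma~\ref{lem:s_au} then gives $s_i\star s_j=s_is_j$, and symmetrically $s_j\star s_i=s_js_i$, and these agree because disjoint transpositions commute in $\mathbb S_n$. (Note that $s_{i+1}\star s_i\ne s_{i+1}s_i$ in general, so the length-three relation genuinely requires the twisted product.)

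The length-three braid relation is the only part requiring real work. Since the support $\{i+1,i+2\}$ of $s_{i+1}$ does not contain $i$, Lemma~\ref{lem:s_au} first gives $s_i\star s_{i+1}=s_is_{i+1}$. Applying Lemma~\ref{lem:s_au} again to $(s_is_{i+1})\star s_i$ and to $s_{i+1}\star(s_is_{i+1})$, and using that $i+1$ lies in the support of both $s_i$ and $s_{i+1}$ so that $\gamma_{i+1}\rhd(s_is_{i+1})=\bar s_i\bar s_{i+1}$, one obtains
\[
s_i\star s_{i+1}\star s_i=\tfrac12\bigl(s_is_{i+1}s_i+\bar s_i\bar s_{i+1}s_i+s_is_{i+1}\bar s_i-\bar s_i\bar s_{i+1}\bar s_i\bigr),
\]
\[
s_{i+1}\star s_i\star s_{i+1}=\tfrac12\bigl(s_{i+1}s_is_{i+1}+s_{i+1}\bar s_i\bar s_{i+1}+\bar s_{i+1}s_is_{i+1}-\bar s_{i+1}\bar s_i\bar s_{i+1}\bigr).
\]
Every summand is now an honest product in $\Cc G$, and the facts collected above evaluate them; substituting $\bar s_k=s_kr_{k,k+1}$ and pushing the diagonal factors to the right via the conjugation identities gives, for instance, $\bar s_i\bar s_{i+1}=s_is_{i+1}r_{i,i+2}r_{i+1,i+2}=s_is_{i+1}r_{i,i+1}$. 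Writing $w:=s_is_{i+1}s_i=s_{i+1}s_is_{i+1}$, in each bracket the first summand is $w$ and the fourth is $-w$, so they cancel, while the two middle summands each equal $w\,r_{i,i+1}$; hence both triple $\star$-products equal $w\,r_{i,i+1}$, and in particular they agree.

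It is worth recording that the signs appearing in the last step are exactly those of the complex Clifford algebra: under the embedding $(\Cc G,\star)\hookrightarrow\Cc G\otimes\mathit{Cl}_n$ of Remark~\ref{rem:clifford}, $s_i$ corresponds to $s_i^+\otimes1+s_i^-\otimes\alpha_i\alpha_{i+1}$, and the identities above encode the facts that $s_i$ permutes the Clifford generators $\alpha_1,\dots,\alpha_n$ (swapping $\alpha_i$ with $\alpha_{i+1}$) while the bivectors $\alpha_i\alpha_{i+1}$ obey the Clifford relations; from this viewpoint the braid identity is an instance of the Clifford Braiding Theorem of Kauffman--Lomonaco~\cite{kauffman2016braiding}. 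I expect the bookkeeping in evaluating the eight summands above to be the main obstacle; the rest is formal.
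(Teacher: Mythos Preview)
Your proof is correct. The verification of $s_i\star s_i=r_{i,i+1}$ and of the far commutation relation is essentially identical to the paper's. For the length-three braid relation, however, you take a genuinely different route.

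The paper avoids expanding both sides by evaluating the associative product $s_{i+1}\star s_{i,i+2}\star s_i$ in two ways, bracketed as $(s_{i+1}\star s_{i+1}s_is_{i+1})\star s_i$ and as $s_{i+1}\star(s_is_{i+1}s_i\star s_i)$. The key observation is that the long transposition $s_{i,i+2}=s_is_{i+1}s_i$ is $\gamma_{i+1}$-invariant, so by Lemma~\ref{lem:s_au} the inner $\star$-product in each bracketing collapses to the ordinary product; this turns the two bracketings into $s_i\star s_{i+1}\star s_i$ and $s_{i+1}\star s_i\star s_{i+1}$ respectively, without ever writing out the four-term expansions.

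Your approach instead expands both triple products directly via Lemma~\ref{lem:s_au}, yielding four group-algebra terms on each side, and then evaluates them by pushing the diagonal factors $r_{k,k+1}$ to the right. This is more computational, but it is entirely elementary and has a payoff the paper's argument does not: you actually compute the common value $s_is_{i+1}s_i\cdot r_{i,i+1}$ of the triple product, whereas the paper only establishes equality. Your closing remark on the Clifford-algebra interpretation matches the paper's own Remark following the lemma.
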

\begin{proof}
By Lemma~\ref{lem:s_au}, $s_i \star s_i = s_i s_i^+ + \bar s_i s_i^- = \frac12 (1+r_{i,i+1})+\frac12 r_{i,i+1}(1-r_{i,i+1})=r_{i,i+1}$.
\bb

\nt If $|i-j|>1$, then $s_j$ is $\gamma_i$-invariant, hence $q_i\rhd s_j=0$. 
Then by Lemma~\ref{lem:s_au}, $s_i \star s_j =s_i s_j$. This is symmetric in $i,j$, so $s_i$ and $s_j$ $\star$-commute.\bb

\nt To check the braid relation for $s_i$ and $s_{i+1}$, we can assume $i=1$. We calculate the product 
\begin{equation}\label{eq:twoways}
	s_2 \star s_{1,3} \star s_1 = (s_2 \star s_2s_1s_2) \star s_1 = s_2 \star (s_1s_2s_1 \star s_1)
\end{equation}
in two ways. First, since the transposition $s_{1,3}=s_2s_1s_2$ is $\gamma_2$-invariant, by Lemma~\ref{lem:s_au} 
$s_2 \star s_2s_1s_2 = s_2s_2s_1s_2 = s_1s_2$. This is the same as $s_1\star s_2$ because $s_1$ is $\gamma_3$-invariant, so~\eqref{eq:twoways} equals 
$s_1\star s_2\star s_1$. \bb

\nt On the other hand, by $\gamma_2$-invariance of
$s_{1,3}=s_1s_2s_1$ and the second part of Lemma~\ref{lem:s_au}, $s_1s_2s_1\star s_1
=s_1s_2s_1s_1=s_1s_2 = s_1\star s_2$, so~\eqref{eq:twoways} equals $s_2\star s_1 \star s_2$.
The $\star$-braid relations are proved.
\end{proof}
\begin{remark}
	If $(H_c,\star)$ is embedded in $H_c\otimes \mathit{Cl}_n$ as in Remark \ref{rem:clifford},
	the simple transposition $s_i$ becomes 
	$s_i^+\otimes 1 + s_i^-\otimes \alpha_i\alpha_{i+1}$.
The calculation to prove Lemma~\ref{lem:braid_relations} is then equivalent to verifying part of the Clifford Braiding Theorem of Kauffman and Lomonaco \cite{kauffman2016braiding} which states that the $e_i:=\frac{1+\alpha_i\alpha_{i+1}}{\sqrt2}$, $i\in [n-1]$, obey the braid relations
in $\mathit{Cl}_n$. The Clifford Braiding Theorem goes further to assert 
the circular braid relations involving $e_n:=1+\alpha_n\alpha_1$, but these do not arise from Lemma~\ref{lem:braid_relations}.
\end{remark}

\subsection{The extension of $\phi$ from the generators to the group algebra $\Cc \mu(G)$.}\label{group_relations_proof_sec}

To prove that the assignment $\phi(\sigma_i)=\bar s_i$, $\phi(t)=t$
extends to a homomorphism $\phi\colon \Cc \mu(G) \to (\Cc G,\star)$ of algebras, we check relations (i)--(iii) from the presentation of $\mu(G)$ given in Theorem~\ref{mystic_presentation}.\bb

\nt (i) $\sigma_i \sigma_i=r_{i,i+1}$.
We check that the relation $\phi(\sigma_i)\star \phi(\sigma_i)=\phi(r_{i,i+1})$
holds in $(H_c,\star)$. The left-hand side is
$\bar s_i\star \bar s_i=(s_i r_{i,i+1})\star (
s_i r_{i,i+1})$, which by Lemma~\ref{lem:starisdotfort}
is $s_i\star s_i$. By Lemma~\ref{lem:braid_relations}, this is $r_{i,i+1}$, the same as 
$\phi(r_{i,i+1})$.\bb

\nt (ii) $\sigma_i\sigma_j=\sigma_j\sigma_i$ for $i-j>1$. 
We need to check that $\phi(\sigma_i)=\bar s_i$ and $\phi(\sigma_j)=\bar s_j$ $\star$-commute.
This follows from Lemma~\ref{lem:starisdotfort} and Lemma~\ref{lem:braid_relations}.\bb

\nt $\sigma_i\sigma_{i+1}\sigma_i=\sigma_{i+1}\sigma_i \sigma_{i+1}$. 
%We need to check the $\star$-braid relation for the $\bar s_i$. This follows from the $\star$-braid relation for the $s_i$, 
By Lemma~\ref{lem:braid_relations}, 
$s_i\star s_{i+1}\star s_i=s_{i+1}\star s_i\star s_{i+1}$, and by Proposition~\ref{prop:twist_is_functorial}, $(H_c,\star)$ is a $T$-module algebra. Acting by $\gamma_{i+1}$ on both sides gives the required relation $\bar s_i\star\bar  s_{i+1}\star\bar  s_i=\bar s_{i+1}\star\bar  s_i\star\bar  s_{i+1}$.\bb

%This is compatible with $\phi(\sigma_a)$ being some non-zero scalar $c_a$ times 
%$\bar s_a$. To make sure the braid relations work, $c_a$ bust be the same for all $a$.
%%$$
%%\bar s_a \star \bar s_{a+1} \star \bar s_a = 
%%r_{a,a+1} s_a \star r_{a+1,a+2}s_{a+1} \star r_{a,a+1} s_a
%%= r_{a,a+1}r_{a,a+2}r_{a+1,a+2} \star  s_a \star s_{a+1} \star s_a
%%$$
%%which is equal to
%%$$
%%\bar s_{a+1} \star \bar s_{a} \star \bar s_{a+1} = 
%%r_{a+1,a+2} s_{a+1} \star r_{a,a+1} s_{a} \star r_{a+1,a+2} s_{a+1}
%%= r_{a+1,a+2} r_{a,a+2}r_{a,a+1} \star  s_{a+1} \star s_{a} \star %%s_{a+1}.
%%$$
%%This proves the braid relations between $\phi(\sigma_i)$, $i\in %%[n-1]$, with respect to the $\star$-product.\bb

\nt (iii) $\sigma_i t\sigma_i^{-1}=s_{i}(t)$. We need to check that 
$\bar s_i\star t\star s_i=s_i(t)$. 
The left-hand side rewrites by Lemma~\ref{lem:starisdotfort} as $(\bar s_i\star s_i)s_i(t)
=r_{i,i+1}(\bar s_i\star \bar s_i)s_i(t)$, which simplifies to $s_i(t)$ by~(i).\bb

\nt We can now describe the map $\phi$ on the whole of $\Cc \mu(G)$ using a special basis of $\Cc \mu(G)$: 
\begin{proposition}\label{prop:theta_w}
There exist involutions $\theta_w\in \Cc T(2,1,n)$,
indexed by $w\in \mathbb S_n$, such that $\{w\theta_w t: w\in \mathbb S_n, t\in T(m,p,n)\}$ is a basis of $\Cc\mu(G)$. In this basis, 
\begin{equation}\label{eq:phi_formula}
	\phi(w\theta_w t)=wt.
\end{equation} 
%If $w$ is an even permutation, $\theta_w\in \Cc T(2,2,n)$, whereas if $w$ is odd, $\theta_w$ lies in the span of the coset $T(2,1,n)^- := \{\tau\in T(2,1,n): \det \tau=-1\}$ of $T(2,1,n)$.
\end{proposition}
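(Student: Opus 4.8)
The plan is to determine $\phi$ on a basis of $\Cc\mu(G)$ by ``inverting'' it over the permutation subgroup $\mathbb S_n$. Recall from \S\ref{group_relations_proof_sec} that $\phi$ restricts to an algebra homomorphism $\Cc\mu(G)\to(\Cc G,\star)$. Since $\bar s_i=s_i\star r_{i,i+1}$ (this is just $s_{i,i+1}^{(-1)}=s_{i,i+1}\,t_i^{(-1)}t_{i+1}^{(-1)}$, read through Lemma~\ref{lem:starisdotfort}) and $s_i\star s_i=r_{i,i+1}$ by Lemma~\ref{lem:braid_relations}, we get $s_i\star\bar s_i=(s_i\star s_i)\star r_{i,i+1}=1$, hence $\phi(\sigma_i^{-1})=\bar s_i^{\star-1}=s_i$. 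For $w\in\mathbb S_n$ I would fix a reduced word $w=s_{i_1}\cdots s_{i_k}$ and set $\tau_w:=\sigma_{i_1}^{-1}\cdots\sigma_{i_k}^{-1}\in\mu(G)$; the $\sigma_i^{-1}$ obey the braid relations (invert relations (ii) of Theorem~\ref{mystic_presentation}), so Matsumoto's theorem makes $\tau_w$ independent of the chosen word, and its permutation part is $w$ (as $\sigma_i^{-1}=\sigma_{i,i+1}^{(-1)}$). Likewise $\hat w:=\phi(\tau_w)=s_{i_1}\star\cdots\star s_{i_k}$ is well defined, using Lemma~\ref{lem:braid_relations}.

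The heart of the argument is the claim that $\hat w=w\cdot\xi_w$ with $\xi_w$ an involution in $\Cc T(2,2,n)$, which I would prove by induction on $\ell(w)$ (the case $w=e$ giving $\xi_e=1$). If $\ell(w)\ge 1$, write $w=s_jw'$ with $\ell(w)=\ell(w')+1$; then $\hat w=s_j\star\hat{w'}=s_j\star(w'\xi_{w'})=(s_j\star w')\,\xi_{w'}$, the last step because the $T$-invariant factor $\xi_{w'}$ commutes past $p_j\rhd$ and $q_j\rhd$ in Lemma~\ref{lem:s_au}. A direct computation gives $\gamma_j\rhd w'=t_j^{(-1)}w't_j^{(-1)}=r_{j,w'(j)}\,w'$ (with the convention $r_{jj}=1$), so Lemma~\ref{lem:s_au} together with $\bar s_j=s_j r_{j,j+1}$ yields $s_j\star w'=s_j\,u\,w'$, where $u=\tfrac12\bigl(1+r_{j,j+1}+r_{j,w'(j)}-r_{j,j+1}r_{j,w'(j)}\bigr)$ is an involution by~\eqref{eq:involutions}. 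Hence $\hat w=(s_jw')\,(w')^{-1}(u)\,\xi_{w'}=w\cdot\bigl((w')^{-1}(u)\,\xi_{w'}\bigr)$, so $\xi_w=(w')^{-1}(u)\,\xi_{w'}$ is a product of two commuting involutions in the commutative algebra $\Cc T(2,2,n)$, hence an involution in $\Cc T(2,2,n)$ as claimed.

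To finish, I would observe that $\Cc T(2,2,n)\subseteq\Cc T(m,p,n)$, so $\phi$ fixes $\xi_w$ and Lemma~\ref{lem:starisdotfort} gives $\phi(\tau_w\xi_w)=\hat w\star\xi_w=(w\xi_w)\,\xi_w=w$. Since $\tau_w$ is a signed permutation matrix with entries in $\{0,\pm1\}$ and permutation part $w$, it equals $w\,D_w$ in $\mathbb S_n\ltimes\mathbb T_n$ with $D_w\in T(2,1,n)$ an involution; then $\theta_w:=D_w\xi_w\in\Cc T(2,1,n)$ is an involution, $w\theta_w t=\tau_w\xi_w t\in\Cc\mu(G)$, and $\phi(w\theta_w t)=\phi(\tau_w\xi_w)\star\phi(t)=w\star t=wt$ for every $t\in T(m,p,n)$ (again Lemma~\ref{lem:starisdotfort}). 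The vectors $wt$ are exactly the group elements of $G=\mathbb S_n\ltimes T(m,p,n)$, so a basis of $(\Cc G,\star)=\Cc G$; as $\phi$ carries the $|\mathbb S_n|\,|T(m,p,n)|=\dim_\Cc\Cc\mu(G)$ vectors $w\theta_w t$ bijectively onto this basis, these are distinct and linearly independent, hence a basis of $\Cc\mu(G)$ (incidentally showing $\phi|_{\Cc\mu(G)}$ is an isomorphism).

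The main obstacle will be the inductive claim that $\xi_w$ is an involution: this is exactly where the Clifford-type identity~\eqref{eq:involutions} is essential (matching the role of $\mathit{Cl}_n$ in Remark~\ref{rem:clifford}), and the delicate point is arranging the induction along a reduced word so that \eqref{eq:involutions} applies to the correctly-normalised generators $s_j$, $\bar s_j$, $r_{j,j+1}$. Everything else is bookkeeping with Lemmas~\ref{lem:starisdotfort}--\ref{lem:braid_relations}, Matsumoto's theorem, and the order $|\mu(G)|=|\mathbb S_n|\,|T(m,p,n)|$ from Theorem~\ref{mystic_presentation}.
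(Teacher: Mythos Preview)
Your proof is correct and follows essentially the same inductive strategy as the paper: compute the $\star$-product of a simple generator with a permutation (you use $s_j\star w'$ via $\phi(\sigma_j^{-1})=s_j$, whereas the paper uses $\bar s_i\star w$ via $\phi(\sigma_i)=\bar s_i$), observe through~\eqref{eq:involutions} that the correction factor is an involution in $\Cc T(2,2,n)$, iterate along a word for $w$, and finish by the dimension count. Your appeal to Matsumoto's theorem is a nice touch that makes $\theta_w$ canonical, but it is not needed since the proposition only asserts existence; the paper simply fixes an arbitrary factorisation of $w$.
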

\begin{proof} First of all, we observe that, for each 
$i\in[n-1]$ and $w\in \mathbb S_n$,
$$
\bar s_i \star w = s_iw \cdot \eta_i(w)
$$
for some involution $\eta_i(w)\in \Cc T(2,2,n)$. Indeed, denote $t_i^{(-1)}\cdot w^{-1}(t_i^{(-1)})$ by $r$ so that $\gamma_i\rhd w=wr$. By Lemma~\ref{lem:starisdotfort} and Lemma~\ref{lem:s_au}, 
$$
\bar s_i\star w=s_ir_{i,i+1}\cdot \frac12(w+wr) + s_i 
\cdot \frac12 (w-wr) = s_iw\cdot \frac12(1-r+r'+rr')
$$
with $r'=w^{-1}(r_{i,i+1})$. Thus,  $\eta_i(w)=\frac12(1+r+r'-rr')$, which is an involution by~\eqref{eq:involutions}.\bb

\nt Factorise $w$ into simple transpositions as $w=s_{i_k}\dots s_{i_2} s_{i_1}$, and let $\sigma_w = \sigma_{i_k}\dots \sigma_{i_2}\sigma_{i_1}$.
Since $\sigma_i = s_i t_{i+1}$ and $\det \sigma_i=1$, in the group 
$\mathbb S_n\ltimes T(2,1,n)$ one has 
$\sigma_w = wt_w$ with $t_w\in T(2,1,n)$ such that $\det(t_w)=\det(w)$. Therefore,
$$
\phi(wt_w) = \bar s_{i_k}\star \dots \star \bar s_{i_2}\star 
\bar s_{i_1} = s_{i_k}\dots s_{i_2} s_{i_1} \cdot 
\eta_{i_2}(s_{i_1})\eta_{i_3}(s_{i_2} s_{i_1})\dots 
\eta_{i_k} (s_{i_{k-1}}\dots s_{i_1}), 
$$
so \eqref{eq:phi_formula} holds with 
$\theta_w=\eta_{i_2}(s_{i_1})\dots 
 \eta_{i_k}(s_{i_{k-1}}\dots s_{i_1} ) t_w$.
Note that $wt_w$ and $\eta_{i_j}(s)$, hence $w\theta_w$, lie in $\Cc G(2,2,n)$ and so $\{w\theta_w t: w\in \mathbb S_n, t\in T(m,p,n)\}$ is a subset of $\Cc \mu(G)$. It is a basis of the space $\Cc \mu(G)$, because this set is carried by the linear map $\phi$ to the basis $\{wt\}$ of the space $\Cc G$ of the same dimension.
\end{proof}

\subsection{Commutation relations between the \texorpdfstring{${\ux}_i$}{x} and between the \texorpdfstring{$\uy_j$}{y}.}
We need to show that
%The first step in proving Theorem~\ref{main_result} will be to show that
%\begin{equation}\label{eq:rel1}
$\phi({\ux}_i)\star\phi({\ux}_j) = - \phi({\ux}_j)\star\phi({\ux}_i)$, 
$\phi(\underline{y}_i)\star\phi(\underline{y}_j) = - \phi(\underline{y}_j)\star\phi(\underline{y}_i)$
whenever $1\le i<j\le n$. 
%\end{equation}
This is immediate by the following 
%follows from Lemma~\ref{x_ix_j}.
\begin{cor}\label{x_ix_j}
	For all $i,j\in [n]$, $i<j$, %$x_i\star x_i = x_i^2$,
	$$	 
	x_i\star x_i = x_i^2, \quad 
	x_i \star x_j = -x_j \star x_i = x_ix_j, \quad
	x_i \star y_i = x_i y_i, % and $y_i \star x_i = y_ix_i$. 
	\quad 
	x_i \star y_j = x_iy_j, 
	\quad y_j \star x_i =  -y_j x_i. 
	$$
	The same holds where the letters $x$ and $y$ are swapped.
\end{cor}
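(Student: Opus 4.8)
The plan is to reduce every identity to Lemma~\ref{lem:star_eigenvectors} by first determining the $T$-eigencharacter of each generator $x_i$ and $y_i$ of $H_c$. Since $T$ is finite abelian and acts on $H_c$ by Proposition~\ref{prop:action}, we have the eigenspace decomposition $H_c=\bigoplus_\alpha (H_c)_\alpha$, and it suffices to locate $x_i,y_i$ in it. From Proposition~\ref{prop:action}, $\gamma_a\rhd x_i = t_a^{(-1)}(x_i) = (-1)^{\delta_{ai}}x_i$, so $x_i\in (H_c)_{\alpha_{\{i\}}}$, i.e.\ $x_i$ has eigencharacter $\alpha_I$ with $I=\{i\}$. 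For the $y_i$, since $t_a^{(-1)}$ is an involution it equals its own inverse, so the contragredient action gives $\langle \gamma_a\rhd y_i, x_k\rangle = \langle y_i, t_a^{(-1)}(x_k)\rangle = (-1)^{\delta_{ak}}\delta_{ik}$, whence $\gamma_a\rhd y_i = (-1)^{\delta_{ai}}y_i$ and $y_i\in (H_c)_{\alpha_{\{i\}}}$ as well.

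Next I would simply invoke Lemma~\ref{lem:star_eigenvectors} with $I,J$ the relevant singletons, using that $d(\{i\},\{j\})$ equals $1$ when $i>j$ and $0$ otherwise. This immediately gives $x_i\star x_i = x_i^2$; $x_i\star x_j = x_ix_j$ and $x_i\star y_j = x_iy_j$ for $i<j$; $x_i\star y_i = x_iy_i$; and $y_j\star x_i = -y_jx_i$ for $i<j$. To obtain the stated form $x_i\star x_j = -x_j\star x_i$ (and the analogous $y$-identity) I additionally use that $x_i$ and $x_j$ commute in $H_c$ by Definition~\ref{def:RCA}, so that $x_j\star x_i = -x_jx_i = -x_ix_j$; similarly for the $y_j$.

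Finally, the assertion that ``the same holds where the letters $x$ and $y$ are swapped'' is immediate: $x_i$ and $y_i$ carry the identical eigencharacter $\alpha_{\{i\}}$, so the very same application of Lemma~\ref{lem:star_eigenvectors} applies verbatim after interchanging the symbols, and the relevant commutation relations $y_iy_j=y_jy_i$ are again part of Definition~\ref{def:RCA}.

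There is no substantive obstacle here; the only points that warrant care are the contragredient computation that places $y_i$ in the correct eigenspace (where the involutivity of $t_a^{(-1)}$ is used), and keeping track of the asymmetric convention in $d(I,J)$ — counting pairs $(i,j)$ with $i\in I$, $j\in J$, $i>j$ — which is precisely what produces the single sign distinguishing $y_j\star x_i$ from $x_i\star y_j$.
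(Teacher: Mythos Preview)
Your argument is correct and is precisely the paper's approach: the proof in the paper is the single sentence ``Since the $x_i$ and $y_i$ are $T$-eigenvectors with eigencharacter $\alpha_i$ this follows by Lemma~\ref{lem:star_eigenvectors}.'' You have simply made explicit the eigencharacter computation (including the contragredient step for the $y_i$) and the use of $x_ix_j=x_jx_i$ needed to match the stated form $-x_j\star x_i = x_ix_j$, both of which the paper leaves implicit.
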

\begin{proof}
Since the $x_i$ and $y_i$ are $T$-eigenvectors with eigencharacter $\alpha_i$ this follows by Lemma \ref{lem:star_eigenvectors}.
\iffalse
	The condition $\gamma_r \rhd x_i \ne x_i$ and $\gamma_s\rhd x_j\ne x_j$ is satisfied if and only if $(i,j)=(r,s)$. Hence by Corollary~\ref{cor:twist_trick}, 
	$x_i\star x_i=x_i^2$ and $x_i\star x_j=x_ix_j$, given that $i<j$. Furthermore, 
	\begin{align*}  
		\mathcal{F} \rhd (x_j\otimes x_i) = f_{j,i}\rhd (x_j\otimes x_i) &= \frac{1}{2}\left(1\otimes 1 + \gamma_{j}\otimes 1 + 1\otimes \gamma_i - \gamma_j\otimes \gamma_i\right)\rhd (x_j\otimes x_i)\\
		&= \frac{1}{2}\left(x_j\otimes x_i +(-x_j)\otimes x_i+ x_j\otimes (-x_i) - (-x_j)\otimes (-x_i)\right)\\
		&= - x_j\otimes x_i.
	\end{align*}
	It follows that $x_j \star x_i = -x_jx_i$. 
	The argument is the same if $x_i$ is replaced by $y_i$, or $x_j$ is replaced by $y_j$, or both.
\fi
\end{proof}

\subsection{The main commutator relation between \texorpdfstring{${\ux}_i$}{x} and \texorpdfstring{$\uy_j$}{y}.}
We will now check the  relation obtained 
by applying $\phi$ to both sides of the main commutator relation in $\underline{H}_{\underline{c}}$
for $i\ne j$:
\begin{equation}\label{eq:main_commutator_ij}
\phi(\uy_i)\star \phi({\ux}_j) + \phi({\ux}_j)\star \phi(\uy_i) = \underline{c}_1\displaystyle\sum \limits_{\epsilon\in C_m} \epsilon \phi(\sigma_{ij}^{(\epsilon)}).
\end{equation}
%\nt By Section \ref{group_relations_proof_sec}, $\phi$ restricted to %the group algebra $\Cc \mu(G(m,p,n))$ is an algebra homomorphism. %Using $\phi(\sigma_i)=\bar s_i$, we deduce the following: 
To calculate the right-hand side, we need 
\begin{lemma}\label{phi_sigma_epsilon_result} $\phi(\sigma_{ij}^{(\epsilon)})=s_{ij}^{(-\epsilon)}$ if $i<j$, and 
$\phi(\sigma_{ij}^{(\epsilon)})=s_{ij}^{(\epsilon)}$ if $i>j$.
\begin{proof}
Since $\sigma_{ij}^{(\epsilon)}=
\sigma_{ij}^{(1)}t_i^{(\epsilon^{-1})}t_j^{(\epsilon)}$,  
$s_{ij}^{(\pm \epsilon)}
=s_{ij}^{(\pm 1)}\star t_i^{(\epsilon^{-1})}t_j^{(\epsilon)}$ 
and $\phi (t_i^{(\epsilon^{-1})}t_j^{(\epsilon)})=t_i^{(\epsilon^{-1})}t_j^{(\epsilon)}$, 
it is enough to prove the Lemma for $\epsilon=1$.\bb

\nt The case $i<j$: if $j=i+1$, the statement becomes $\phi(\sigma_i)=\bar s_i$ which is true by definition of~$\phi$. To proceed by induction in $j$, we consider the identity $s_j s_{i,j+1}^{(-1)} = s_{ij}^{(-1)}s_j$. 
Since $s_{i,j+1}^{(-1)}$ is $\gamma_j$-invariant, and 
$s_{ij}^{(-1)}$ is $\gamma_{j+1}$-invariant, 
this rewrites by Lemma~\ref{lem:s_au} as 
$s_j \star s_{i,j+1}^{(-1)} = s_{ij}^{(-1)}\star s_j$.
Using $\bar s_j\star s_j=1$,
$$
 s_{i,j+1}^{(-1)} = \bar s_j\star s_{ij}^{(-1)}\star s_j
 = \phi(\sigma_j \sigma_{ij}^{(1)} \sigma_j^{-1}) = \phi(\sigma_{i,j+1}^{(1)}),
$$
so the inductive step is done, and the case $i<j$ follows.\bb

\nt The case $i > j$: $\phi(\sigma_{ij}^{(\epsilon)}) = \phi(\sigma_{ji}^{(-\epsilon^{-1})})$, which by the first part of the Lemma is  $s_{ji}^{(\epsilon^{-1})}=s_{ij}^{(\epsilon)}$.
\end{proof}
\end{lemma}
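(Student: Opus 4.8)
The plan is to reduce to the case $\epsilon=1$ by splitting off a diagonal factor, and then to evaluate $\phi(\sigma_{ij}^{(1)})$ by induction on $|i-j|$, transporting genuine group relations of $G$ into $\star$-relations in $(\Cc G,\star)$ by means of Lemma~\ref{lem:s_au}.

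First I would record the factorisations $\sigma_{ij}^{(\epsilon)}=\sigma_{ij}^{(1)}\,t_i^{(\epsilon)}t_j^{(\epsilon^{-1})}$ and $s_{ij}^{(\pm\epsilon)}=s_{ij}^{(\pm1)}\,t_i^{(\epsilon)}t_j^{(\epsilon^{-1})}$ in $\mathbb S_n\ltimes\mathbb T_n$ (checked by evaluating on the basis $x_1,\dots,x_n$), noting that $t_i^{(\epsilon)}t_j^{(\epsilon^{-1})}\in T(m,p,n)$. Since $\phi$ is an algebra homomorphism that fixes $T(m,p,n)$ pointwise, and by Lemma~\ref{lem:starisdotfort} $\star$-multiplication by a torus element coincides with ordinary multiplication, applying $\phi$ to the first factorisation reduces the claim to showing $\phi(\sigma_{ij}^{(1)})=s_{ij}^{(-1)}$ when $i<j$ and $\phi(\sigma_{ij}^{(1)})=s_{ij}^{(1)}$ when $i>j$.

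For $i<j$ I would induct on $j-i$. The base case $j=i+1$ is the definition $\phi(\sigma_i)=\bar s_i=s_{i,i+1}^{(-1)}$. For the step I would use the conjugation identity $\sigma_{i,j+1}^{(1)}=\sigma_j\,\sigma_{ij}^{(1)}\,\sigma_j^{-1}$ in $\mu(G)$ (again checked on the basis), together with the fact that $\phi$ is a unital homomorphism, so $\phi(\sigma_j^{-1})=\phi(\sigma_j)^{-1}$; from $\sigma_j^{-1}=\sigma_j r_{j,j+1}$ and Lemma~\ref{lem:starisdotfort} one gets $\phi(\sigma_j^{-1})=\bar s_j\,r_{j,j+1}=s_j$, whence also $\bar s_j\star s_j=\phi(\sigma_j\sigma_j^{-1})=1$. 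Combined with the inductive hypothesis this gives $\phi(\sigma_{i,j+1}^{(1)})=\bar s_j\star s_{ij}^{(-1)}\star s_j$. Now the honest identity $s_j\,s_{i,j+1}^{(-1)}=s_{ij}^{(-1)}\,s_j$ holds in $G$; since the index $j$ does not occur in $s_{i,j+1}^{(-1)}$ (so it is $\gamma_j$-invariant) and the index $j+1$ does not occur in $s_{ij}^{(-1)}$ (so it is $\gamma_{j+1}$-invariant), Lemma~\ref{lem:s_au} converts both sides to $\star$-products, giving $s_j\star s_{i,j+1}^{(-1)}=s_{ij}^{(-1)}\star s_j$. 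Substituting and using associativity of $\star$ together with $\bar s_j\star s_j=1$ then yields $\bar s_j\star s_{ij}^{(-1)}\star s_j=(\bar s_j\star s_j)\star s_{i,j+1}^{(-1)}=s_{i,j+1}^{(-1)}$, completing the induction. The case $i>j$ follows from the index-reversal identities $\sigma_{ij}^{(\epsilon)}=\sigma_{ji}^{(-\epsilon^{-1})}$ and $s_{ij}^{(\epsilon)}=s_{ji}^{(\epsilon^{-1})}$: as $j<i$, $\phi(\sigma_{ij}^{(\epsilon)})=\phi(\sigma_{ji}^{(-\epsilon^{-1})})=s_{ji}^{(\epsilon^{-1})}=s_{ij}^{(\epsilon)}$.

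I do not anticipate a genuine obstacle; the one thing requiring care is keeping track of which generator $\gamma_a$ of $T$ fixes each ``long'' reflection $s_{ij}^{(\pm1)}$, since it is precisely $\gamma_j$- and $\gamma_{j+1}$-invariance that makes Lemma~\ref{lem:s_au} collapse to the clean form $s_j\star u=s_j u$ (resp.\ $u\star s_j=u s_j$) that lets one read off $\star$-relations from the corresponding relations in $G$. Beyond this, the argument consists of routine $2\times2$ block-matrix verifications on the basis $x_1,\dots,x_n$.
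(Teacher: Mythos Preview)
Your proof is correct and follows essentially the same route as the paper's: reduce to $\epsilon=1$ via the torus factorisation and Lemma~\ref{lem:starisdotfort}; then for $i<j$ induct on $j-i$, converting the group identity $s_j\,s_{i,j+1}^{(-1)}=s_{ij}^{(-1)}\,s_j$ into a $\star$-identity via Lemma~\ref{lem:s_au} and using $\bar s_j\star s_j=1$; finally handle $i>j$ by the index-reversal $\sigma_{ij}^{(\epsilon)}=\sigma_{ji}^{(-\epsilon^{-1})}$. The only difference is that you spell out why $\phi(\sigma_j^{-1})=s_j$ and the associativity step more explicitly than the paper does.
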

\nt 
Now, by Corollary~\ref{x_ix_j} and Lemma~\ref{phi_sigma_epsilon_result} relation \eqref{eq:main_commutator_ij} for $i<j$ rewrites as
$$
y_i x_j - x_j y_i = \underline c_1 
\sum\limits_{\epsilon\in C_m} \epsilon s_{ij}^{(-\epsilon)}.
$$
Substituting $\underline c_1=-c_1$ and recalling that $-1\in C_m$, and so $-\epsilon \in C_m$ iff $\epsilon\in C_m$, transforms this equation into the relation between $y_i$ and
$x_j$ in Definition~\ref{def:RCA}. 
If $i>j$, \eqref{eq:main_commutator_ij} becomes 
$-(y_ix_j-x_jy_i)=-c_1\sum_{\epsilon\in C_m} \epsilon s_{ij}^{(\epsilon)}$, 
which is again true by Definition~\ref{def:RCA}.
Thus, \eqref{eq:main_commutator_ij} is proved.

\subsection{The main commutator relation between \texorpdfstring{${\ux}_i$}{x} and \texorpdfstring{$\uy_i$}{y}.}

%We will now check the case where $i=j$. 
We need to show that
\begin{equation}\label{main_commutator}
\phi(\uy_i) \star\phi({\ux}_i) - \phi({\ux}_i)\star \phi(\uy_i) = 1 + \underline{c}_1 \displaystyle\sum\limits_{j\not=i}\displaystyle\sum\limits_{\epsilon\in C_m} \phi(\sigma_{ij}^{(\epsilon)}) + \displaystyle\sum\limits_{\zeta\in C_{\frac{m}{p}}\setminus\{1\}} \underline{c}_\zeta \phi(t_{i}^{(\zeta)})
\end{equation}
where the left-hand side is $y_ix_i-x_iy_i$ by Corollary~\ref{x_ix_j}.
Apply Lemma~\ref{phi_sigma_epsilon_result}, note
that $\sum_{\epsilon\in C_m}s_{ij}^{(-\epsilon)}$ is the same as  
$\sum_{\epsilon\in C_m}s_{ij}^{(\epsilon)}$ because $-1\in C_m$,
and substitute
$\phi(t_i^{(\zeta)})=t_i^{(\zeta)}$, $\underline{c}_\zeta =-c_\zeta$
to rewrite the right-hand side of~\eqref{main_commutator} 
as  $1-c_1\sum_{j\neq i}\sum_{\epsilon\in C_m}s_{ij}^{(\epsilon)}-\sum_{\zeta\in C_{\frac{m}{p}}\setminus \{1\}}c_\zeta t_i^{(\zeta)}$. This shows that  \eqref{main_commutator} is true by the relation for $y_ix_i-x_iy_i$ in Definition~\ref{def:RCA}.

\subsection{The semidirect product relations.}
We need to prove 
\begin{equation}\label{eq:semidirect}
	\phi(g)\star x_i = g(x_i) \star \phi(g), \qquad
	\phi(g)\star y_i = g(y_i) \star \phi(g)
\end{equation}
for all $g\in \mu(G)$ and all $i\in [n]$. We will prove this only for $x_i$, as the proof for $y_i$ will be similar. Moreover, observe that if \eqref{eq:semidirect} holds for $g=g_1$ and 
for $g=g_2$, then it holds for $g=g_1g_2\in \mu(G)$:
$$
\phi(g)\star x_i = \phi(g_1)\star\phi(g_2) \star x_i
=\phi(g_1)\star g_2(x_i)\star \phi(g_2)
= g_1(g_2(x_i)) \star \phi(g_1) 
\star \phi(g_2) = 
g(x_i) \star \phi(g).
$$
By Theorem~\ref{mystic_presentation}, every element of
$\mu(G)$ is a product of some generators $\sigma_j$, $j\in [n-1]$, and of some $t\in T(m,p,n)$.  
If $g=t\in T(m,p,n)$, one can omit $\phi$ and $\star$ in 
\eqref{eq:semidirect} by Lemma~\ref{lem:starisdotfort}, 
and then \eqref{eq:semidirect} clearly holds by the semidirect product relations in $H_c$. Hence it is enough to prove \eqref{eq:semidirect} when $g=\sigma_j$:
\begin{equation}\label{eq:semidirect_simple}
\phi(\sigma_j)\star x_i  =\sigma_j(x_i)\star\phi(\sigma_j), 
\quad\text{ equivalently } \bar s_j\star x_i = \sigma_j(x_i)\star \bar s_j.
\end{equation}
If $i\notin \{j,j+1\}$, then $x_i = \sigma_j(x_i)$ is both 
$\gamma_j$ and $\gamma_{j+1}$-invariant, 
so by Lemma~\ref{lem:s_au}, \eqref{eq:semidirect_simple} rewrites as $\bar s_j x_i = x_i \bar s_j$.\bb

\nt If $i=j+1$, $x_{j+1}$ is $\gamma_j$-invariant and  
$\sigma_j(x_{j+1})=-x_j$ is $\gamma_{j+1}$-invariant, 
so by Lemma~\ref{lem:s_au}, \eqref{eq:semidirect_simple}
is $\bar s_j x_{j+1} = -x_j \bar s_j$.\bb

\nt If $i=j$, $x_j$ is a $T$-eigenvector with eigencharacter $\alpha_j$ and $\sigma_j(x_j)=x_{j+1}$, with $\alpha_{j+1}$, so by Lemma~\ref{lem:s_au} \eqref{eq:semidirect_simple}
is $s_j x_j = x_{j+1} s_j$. In all three cases, 
\eqref{eq:semidirect_simple} is true by the semidirect product relations in~$H_c$.

%
%$\bar s_i \star x_k = \bar s_i\cdot (p_i\rhd x_k) + s_i \cdot (q_i \rhd x_k)$. 
%and 
%
%*****
%
%The left-hand side 
%\begin{equation}\label{semi_direct_cases}\bar s_i\star x_k=  \begin{rcases}
%    \begin{dcases}
%x_k,& k\neq i,i+1\\
%x_{i+1},& k=i\\
%-x_i,& k=i+1
%\end{dcases}\end{rcases}\star \bar s_i\end{equation}
%So, using Lemma \ref{lem:s_ax},
%$$s_a \star x_a = \bar s_a x_a = -x_{a+1}\bar s_a = -x_{a+1}\star s_a;
%\qquad
%s_a \star x_{a+1} = s_ax_{a+1} = x_as_a = x_a \star s_a;$$
%if $b\notin \{a,a+1\}$, 
%$$s_a \star x_b = s_a x_b = x_b s_a = x_b\star s_a.$$
%We use this to check the second equation in \eqref{semi_direct_cases}: $\bar s_i\star x_i=t_it_{i+1}\star s_i \star x_i=t_i t_{i+1}(-x_{i+1}\star s_i)=x_{i+1}\star \bar s_i$, as required. The remaining equations in \eqref{semi_direct_cases} follow similarly.\bb

%
%\begin{lemma}\label{lem:s_ax}
%	For $a\in [n-1]$, $b,c\in[n]$, $b\ne a$, $c\ne a+1$,
%	%\begin{alignat*}{4}
%	%&
%	$$
%	s_a\star x_a = \bar s_a x_a, \qquad 
%	s_a\star x_b=s_a x_b, \qquad
%	x_{a+1} \star s_a = x_{a+1}\bar s_a, \qquad
%	x_c \star s_a=x_cs_a,
%	%\\
%	%&s_a\star y_a = \bar s_a y_a, \qquad 
%	%&&s_a\star y_b=s_a y_b, \qquad
%	%&&y_{a+1} \star s_a = y_{a+1}\bar s_a, \qquad
%	%&&y_c \star s_a=y_cs_a.
%	%\end{alignat*}
%	$$
%	and the same holds where $x$ is replaced by $y$.
%	%(\textit{Do we need to say something about
%		%more general $\star$-standard monomials?})
%\end{lemma}
%\begin{proof}
%\end{proof}

\subsection{Bijectivity of $\phi$.}

Hence all the relations are satisfied and $\phi$ is a well-defined algebra homomorphism. We are left to prove that $\phi$ is bijective.
It is enough to construct a spanning set of $\underline{H}_{\underline c}$
which is carried by $\phi$ to a basis of $H_c$.\bb
%Analogously to Proposition \ref{prop:action} we will show next that $\underline{H}_c (\mu(G(m,p,n)))$ is also a $\Cc T$-module algebra, and that $\phi$ is a $\Cc T$-module homomorphism. Finally we apply the PBW-type theorems to deduce that $\phi$ maps an eigenbasis to an eigenbasis, and is therefore a bijection.\bb
%
%\nt \textcolor{red}{$\underline{H}_c (\mu(G(m,p,n)))$ is a $\Cc T$-module algebra}\bb
%
%\nt \textcolor{red}{$\phi$ is a $\Cc T$-module homomorphism}\bb
%Let us recall that the PBW-type theorem for the rational Cherednik algebra states that as a $\mathbb{C}$-vector space, $H_c$ has basis 
%$$\{ x_{1}^{k_1}\ldots x_{n}^{k_n}
%w t y_{1}^{l_1}\ldots y_{n}^{l_n}: w \in \mathbb S_n, t\in T(m,p,n), %k_{i},l_{i}\geq 0\}$$

%\item $\underline{H}_c (\mu(G(m,p,n)))$ has basis $\{ {\ux}_{1}^{k_1}\ldots {\ux}_{n}^{k_n}g \uy_{1}^{l_1}\ldots \uy_{n}^{l_n}: g\in \mu(G(m,p,n)), k_{i},l_{i}\geq 0\}$

\nt Let $w\in \mathbb S_n$. Consider the coset $w T(m,p,n)$ of $T(m,p,n)$ inside $G$ and let $\langle w  T(m,p,n) \rangle$ denote the span of this coset,  a subspace of $\Cc G$. Observe that $\langle w T(m,p,n)\rangle$ is a $T$-submodule of $\Cc G$, because, if $i\in [n]$, $t\in T(m,p,n)$, $\gamma_i \rhd (wt)=wrt$ where $r=t_i^{(-1)}\cdot w^{-1}(t_i^{(-1)})\in T(2,2,n)\subset T(m,p,n)$. Therefore, $\langle w T(m,p,n)\rangle$ has $T$-eigenbasis $wb_1(w),\dots,wb_N(w)$ where $N=|T(m,p,n)|$ and $b_1(w),\dots,b_N(w)\in \Cc T(m,p,n)$. It follows that $\{wb_m(w): w\in\mathbb S_n,m\in[N]\}$ is a basis of the group algebra $\Cc G=\oplus_{w\in\mathbb S_n}\langle w T(m,p,n)\rangle$. The PBW-type tensor product factorisation of $H_c$, see Theorem \ref{thm:pbw_rational}, implies that
\begin{equation}\label{main_prf_rca_best_basis}
\mathcal B = \{ x_{1}^{k_1}\ldots x_{n}^{k_n} w b_m(w) y_{1}^{l_1}\ldots y_{n}^{l_n}: w \in \mathbb S_n, m\in [N], k_{i},l_{i}\geq 0\}
\end{equation}
is a basis of $H_c$.\bb

\nt We replace the basis $\{w\theta_w t\}$ of $\Cc \mu(G)$, given by Proposition \ref{prop:theta_w}, by the following alternative basis: $\{w\theta_w b_m(w): w\in \mathbb S_n, m\in [N]\}$. 
It is a basis of $\Cc \mu(G)$ because by Proposition~\ref{prop:theta_w}, 
it is carried by $\phi$ to the basis  $\{wb_m(w)\}$ of $\Cc G$, 
and $\dim \Cc \mu(G)=\dim \Cc G$. 
It then follows from the defining relations in $\underline H_{\underline c}$ that the set
\begin{equation}\label{main_prf_nbca_best_basis}
\underline{\mathcal B}=\{{\ux}_{1}^{k_1}\ldots {\ux}_{n}^{k_n}
w \theta_w b_m(w) {\uy}_{1}^{l_1}\ldots {\uy}_{n}^{l_n}: w \in \mathbb S_n, m\in [N], k_{i},l_{i}\geq 0\}
\end{equation}
spans $\underline H_{\underline c}$.\bb

\nt It is immediate from Corollary \ref{x_ix_j} that $\phi({\ux}_{1}^{k_1}\ldots {\ux}_{n}^{k_n}) = x_{1}^{k_1}\ldots x_{n}^{k_n}$ and $\phi({\uy}_{1}^{l_1}\ldots {\uy}_{n}^{l_n}) = y_{1}^{k_1}\ldots y_{n}^{l_n} $.
%Now by Proposition \ref{prop:theta_w}, $\phi(w\theta_w t)=wt$, where $w\theta_wt$ is a general basis element of $\Cc \mu(G)$ with $w\in \mathbb{S}_n, \theta_w\in \Cc T(2,1,n), t\in T(m,p,n)$.\bb
%\nt Therefore we find $\phi$ maps a general basis element of the negative braided Cherednik algebra as: 
%$$\phi({\ux}_{1}^{k_1}\ldots {\ux}_{n}^{k_n} w\theta_ w t {\uy}_{1}^{l_1}\ldots {\uy}_{n}^{l_n}) = x_{1}^{k_1}\ldots x_{n}^{k_n} \star w t \star y_{1}^{k_1}\ldots y_{n}^{l_n}$$
We can now view how a general basis element of \eqref{main_prf_nbca_best_basis} is mapped under $\phi$,
\begin{align*}
\phi({\ux}_{1}^{k_1}\ldots {\ux}_{n}^{k_n}
w \theta_w b_m(w) {\uy}_{1}^{l_1}\ldots {\uy}_{n}^{l_n})
& =x_{1}^{k_1} \ldots x_{n}^{k_n} \star 
w b_m(w) \star y_{1}^{l_1}\ldots y_{n}^{l_n}
\\ & = \pm x_{1}^{k_1} \ldots x_{n}^{k_n}
w b_m(w) y_{1}^{l_1}\ldots y_{n}^{l_n}
\end{align*}
where the second equality follows by Lemma \ref{lem:star_eigenvectors}, 
since $x_{1}^{k_1} \ldots x_{n}^{k_n}$, $w b_m(w)$ and $ y_{1}^{l_1}\ldots y_{n}^{l_n}$ are $T$-eigenvectors. On noting that a spanning set of $\underline H_{\underline c}$ has been mapped (up to a scalar multiple of $\pm 1$) to the basis of $H_c$ given in \eqref{main_prf_rca_best_basis}, we
conclude that $\underline {\mathcal B}$ is a basis of $\underline H_{\underline c}$ and that $\phi$ is bijective, as required.

\begin{remark}\label{rem:new_proof}
The above argument showing that  $\underline {\mathcal B}$ is a basis of $\underline H_{\underline c}$ is a new proof of the PBW-type theorem for  negative braided Cherednik algebras, obtained earlier in \cite{Bazlov2008NoncommutativeDO}. 
\end{remark}

\section{Twists of representations}\label{twists_of_reps_sec}

From Theorem~\ref{main_result} we know rational Cherednik algebras can be twisted, and the result is isomorphic to a negative braided Cherednik algebra. Next we show that representations of rational Cherednik algebras can also be twisted, generating a representation of the corresponding negative braided Cherednik algebra. A systematic approach to twists of representations, going beyond the examples considered in this section, will be explored in the upcoming paper \cite{followonpaper}.

\subsection{Twisting and finite-dimensional representations.}\label{twist_procedure_sec}
For the purposes of this section, we assume $\frac mp$ to be even. Recall that by Proposition \ref{prop:action} $H_c(G(m,p,n))$ is a $\Cc T$-module algebra under the conjugation action of $T=(C_2)^n$. When $\frac{m}{p}$ is even, $T$ is embedded as the subgroup $T(2,1,n)$ in $G(m,p,n)$, so 
%given by $t \rhd x = txt^{-1}$ for all $t\in T$, $x\in H_c(G(m,p,n))$.
a representation
$$\rho:H_c(G(m,p,n))\rt \End(V)$$
of $H_c(G(m,p,n))$ induces a $T$-action on $\End(V)$ via $t \RHD f=\rho(t)f\rho(t)^{-1}$ for all $t\in T$, $f\in \End(V)$. With this, $\rho$ becomes a $\Cc T$-module algebra homomorphism.
%We see this action does not make sense unless $t\in H_c(G(m,p,n))$, hence why we require $T$ to be a subgroup of $G(m,p,n)$, and so $\frac{m}{p}$ be even. 
\iffalse
\begin{proof}
Firstly $1\RHD f:=\rho(1)f\rho(1)^{-1}=f$ using the fact $\rho$ is an algebra homomorphism so $\rho(1)=\id$, where $\id$ is the identity map in $\End(V)$. Next:
$$(t\cd t')\RHD f=\rho(t\cd t')f\rho(t\cd t')^{-1}=\rho(t)\rho(t')f\rho(t')^{-1}\rho(t)^{-1}=t\RHD (t' \RHD f)$$
so $\End(V)$ is a left $\Cc T$-module. Next, using the fact that $\Cc T$ is a Hopf algebra with $\triangle(t)=t\otimes t$ and $\epsilon(t)=1_\Cc$ $\forall t\in \Cc T$, we have:
$$t\RHD fg=\rho(t)fg\rho(t)^{-1}=\rho(t)f\rho(t)^{-1}\rho(t)g\rho(t)^{-1}=(t\RHD f)\cd (t\RHD g)$$
$$t\RHD \id=\rho(t)\cd \id \cd \rho(t)^{-1}=\id=\epsilon(t)\id$$
So $\End(V)$ is indeed a $\Cc T$-module algebra.
\end{proof}
\fi
%Since $\End(V)$ is a $\Cc T$-module algebra, we can twist it by the cocycle $\mathcal{F}$, as we did for $H_c(G(m,p,n))$. We 
Denote by $\End(V)_\mathcal{F}$ the twist of the $\Cc T$-module algebra $\End(V)$ by the cocycle $\mathcal{F}$ defined in Section~\ref{subsec:cocycle_F}.\bb
 
\nt Recall that the underlying vector space is unchanged by twisting, therefore $\rho$
can be viewed as a linear map  $\rho:H_c(G(m,p,n))_\mathcal{F}\rt \End(V)_\mathcal{F}$. %By the following result this linear map between the twisted algebras is additionally an algebra homomorphism of the twisted algebras:

\begin{proposition}\label{twisted_hom}
The linear map $\rho:H_c(G(m,p,n))_\mathcal{F}\rt \End(V)_\mathcal{F}$ is an algebra homomorphism.
\begin{proof} Let $m$ denote the product on $H_c(G(m,p,n))$ and $m'$ be the product on $\End(V)$, so that $\rho \circ m=m'\circ(\rho\otimes \rho)$ because $\rho$ is a homomorphism between the untwisted algebras. The twisted product maps are 
$m_{\mathcal F} = m\circ (\mathcal F^{-1}\rhd)$ 
and $m'_{\mathcal F} = m'\circ (\mathcal F^{-1}\RHD)$, 
and since $\rho$ is a $\Cc T$-module algebra morphism, 
so that $(\rho\otimes \rho)(\mathcal F^{-1}\rhd)=(\mathcal F^{-1}\RHD)(\rho\otimes \rho)$, 
we conclude that $\rho\circ m_{\mathcal F} = 
m'_{\mathcal F}\circ (\rho\otimes\rho)$, as required.
\end{proof}

\end{proposition}

\nt We use this to deduce the following:

\begin{theorem}
For $\frac{m}{p}$ even, if $H_c(G(m,p,n))$ has finite-dimensional representations, then so does the negative braided Cherednik algebra $\underline{H}_{\underline{c}}(\mu(G(m,p,n)))$.
\end{theorem}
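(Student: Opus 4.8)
The plan is to carry a finite-dimensional representation of $H_c(G(m,p,n))$ through the twist and then across the isomorphism of Theorem~\ref{main_result}. So I would start with a nonzero finite-dimensional representation $\rho\colon H_c(G(m,p,n))\to\End(V)$. Since $\frac mp$ is even, $T$ is embedded in $G(m,p,n)$ as $T(2,1,n)$, so, exactly as set up in Section~\ref{twist_procedure_sec}, the rule $t\RHD f=\rho(t)f\rho(t)^{-1}$ makes $\End(V)$ a $\Cc T$-module algebra for which $\rho$ is a $\Cc T$-module algebra homomorphism; this is the only place the hypothesis on $\frac mp$ enters. By Proposition~\ref{twisted_hom}, $\rho$ is then also an algebra homomorphism $(H_c(G(m,p,n)),\star)=H_c(G(m,p,n))_{\mathcal F}\to\End(V)_{\mathcal F}$, and precomposing with the isomorphism $\phi$ of Theorem~\ref{main_result} gives an algebra homomorphism
$$\rho\circ\phi\colon \underline{H}_{\underline{c}}(\mu(G(m,p,n)))\longrightarrow \End(V)_{\mathcal F}.$$

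Next I would observe that the twisted algebra $\End(V)_{\mathcal F}$, although it is no longer literally of the form $\End(W)$, still carries a finite-dimensional representation. By Proposition~\ref{prop:twist_is_functorial} it is a unital associative $\Cc$-algebra whose underlying vector space is $\End(V)$, hence finite-dimensional, so its left regular representation $\lambda\colon \End(V)_{\mathcal F}\to\End_{\Cc}(\End(V)_{\mathcal F})$ is a faithful finite-dimensional representation (of dimension $(\dim V)^2$). The composite $\lambda\circ\rho\circ\phi$ is then the sought representation of $\underline{H}_{\underline{c}}(\mu(G(m,p,n)))$. It is nonzero --- in the sense relevant here, cf.\ the remark on $H_0(G)$ in Section~\ref{cherednik_algs_sec} --- because $\rho$, $\phi$ and $\lambda$ are all unital and $\End(V)_{\mathcal F}\neq 0$ (as $V\neq 0$), so the composite sends $1$ to the identity operator. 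If one wants a smaller module one can instead invoke Remark~\ref{rem:clifford} to realise $\End(V)_{\mathcal F}$ as a unital subalgebra of the semisimple algebra $\End(V)\otimes\mathit{Cl}_n$ and take the latter's minimal faithful module, but for the statement as given the regular representation is enough.

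I do not expect a real obstacle here: the substantive work has already been done in Proposition~\ref{twisted_hom} (functoriality of the twist at the level of representations) and in Theorem~\ref{main_result}. The only points that need a line of care are that $\End(V)_{\mathcal F}$ is not a priori an endomorphism algebra, so one must manufacture an honest finite-dimensional module for it (the regular representation does this), and that throughout ``representation'' should be read in the nonzero sense, which is guaranteed automatically since unitality is preserved by twisting and by $\phi$.
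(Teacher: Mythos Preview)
Your argument is correct and follows exactly the paper's approach: pass $\rho$ through the twist via Proposition~\ref{twisted_hom}, compose with the isomorphism $\phi$ of Theorem~\ref{main_result}, and then use that $\End(V)_{\mathcal F}$ is a finite-dimensional associative algebra and hence admits finite-dimensional modules. The paper's proof says precisely this in one sentence, without naming a specific module; you are simply more explicit in choosing the left regular representation and in isolating where the hypothesis $\frac{m}{p}$ even is used.
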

\begin{proof}
If $\rho:H_c(G(m,p,n))\rt \End(V)$ is a finite-dimensional representation, the algebra $\End(V)_\mathcal{F}$ is finite-dimensional, so it has finite-dimensional modules on which $\underline{H}_{\underline{c}}(\mu(G(m,p,n)))\cong H_c(G(m,p,n))_\mathcal{F}$ acts via the algebra homomorphism 
$\rho:H_c(G(m,p,n))_\mathcal{F}\rt \End(V)_\mathcal{F}$.
%
%Now to each representation of the algebra $\End(V)_\mathcal{F}$, say $\rho':\End(V)_\mathcal{F}\rt \End(V')$, the composite $\rho'\circ \rho$ generates a representation of the negative braided Cherednik algebra $\underline{H}_{\underline{c}}(\mu(G(m,p,n)))\cong H_c(G(m,p,n))_\mathcal{F}$. Since $\End(V)_\mathcal{F}$ is a finite-dimensional algebra, it has finite-dimensional representations, take for instance the trivial representation on $\Cc$. Clearly when $\rho'$ is such a finite-dimensional representation of $\End(V)_\mathcal{F}$, the representation $\rho'\circ \rho$ for $\underline{H}_{\underline{c}}(\mu(G(m,p,n)))$ is also finite-dimensional representation, as required.
\end{proof}

\subsection{Finite-dimensional representations: a general construction.} 
%of rational Cherednik algebras.}\label{representations_rcas_sec}

Let $W$ be an irreducible complex reflection group with reflection representation $V$, and $H_c(W)$ be a rational Cherednik algebra over $W$. 
%In this section and the next we recall some standard representation theory of rational Cherednik algebras, before applying the twisting procedure above to an explicit representation in Section \ref{g2_sec}.\bb
We recall a general approach which produces finite-dimensional representations 
of $H_c(W)$. Start with a simple $\Cc W$-module $\tau$ 
and extend $\tau$ to an $S(V)\# \Cc W$-module where $V$ acts by zero.
The standard $H_c(W)$-module $M_c(\tau)$ is defined as 
%The rational Cherednik algebras $H_c(W)$ have certain modules called ``standard modules", defined quite analogously to the Verma modules for Lie algebras. For instance, take a Then,
%Then the \define{standard module} corresponding to $\tau$ is the $H_c(W)$-module 
$$
M_c(\tau) = H_c(W)\otimes_{S(V)\# \Cc W}\tau.
$$
%The modules $M_c(\tau)$, and their quotients, are referred to as modules of lowest $W$-weight $\tau$. 
The underlying vector space of $M_c(\tau)$ is $S(V^*)\otimes \tau$, hence these are infinite-dimensional representations of $H_c(W)$. The standard module $M_c(\Cc)$ given by $\tau=\Cc$, the trivial $\Cc W$-module, is the famous Dunkl (or polynomial) representation of $H_c(W)$. Every $M_c(\tau)$ has a unique simple quotient, denoted $L_c(\tau)$. For some $\tau$ and some values of $c$, $L_c(\tau)$ are finite-dimensional.

\nt If $x_i$ is a basis of $V$ and $y_i$ the dual basis of $V^*$, one has the following important element of $H_c(W)$:
\begin{equation}\label{h_element}
h=\sum_i x_iy_i+\frac{n}{2}-\sum_{s\in S}\frac{2c_s}{1-\lambda_s}s
%h:=\frac{1}{2}\sum_i x_iy_i+y_ix_i \qquad E:=\frac{1}{2}\sum_i x_i^2\qquad F:=-\frac{1}{2}\sum_i y_i^2
\end{equation}
where $n=\dim(V)$, $S$ is the set of complex reflections in $W$, and $\lambda_s$ is the non-trivial eigenvalue of $s$ in the dual reflection representation. 
%$h$ is invariant under conjugation by $W$, hence $[h,w]=0\ \forall w\in W$, and it also satisfies:
By \cite[Section 2.1]{2003math3194C}, $h$ satisfies the 
commutator relations
\begin{equation}\label{commutators}
[h,x]=x\ \forall x\in V,\qquad [h,y]=-y\ \forall y\in V^*.
\end{equation}

\subsection{Twisting one-dimensional representations of \texorpdfstring{$H_c(G(2,1,n))$}{HcG21n}.}

We restrict the discussion above to the group $G = G(2,1,n)\cong \mathbb S_n \ltimes (C_2)^n$, the Coxeter group of type $B_n$, and consider 
the modules $L_c(\tau)$ over $H_c=H_c(G)$ which are one-dimensional. Such modules correspond to the four 
linear characters $\mathrm{triv},\kappa, \det$ and $\kappa \det$ of~$G$; each
character is determined by its values on $s=s_i$ and $t=t_i^{(-1)}$
as follows:
\begin{equation}\label{eq:characters}
\mathrm{triv}\colon (s,t) \mapsto (1,1), \quad
\kappa\colon (s,t)\mapsto (1,-1), \quad
\det\colon (s,t)\mapsto (-1,-1), \quad
\kappa \det\colon (s,t) \mapsto (-1,1).
\end{equation}
%which was described in Example \ref{g212}. In Example \ref{rca_type_b2} we also gave an explicit presentation of the rational Cherednik algebra $H_c(G(2,1,2))$, which is generated by the vector space $V$ (with basis $x_1,x_2$), the dual space $V^*$ (with dual basis $y_1,y_2$), and by the elements of $G(2,1,2)$. We will now construct a finite-dimensional representation of this algebra, and demonstrate the twisting procedure being applied to it, showing it generates a non-trivial finite-dimensional representation of the corresponding negative braided Cherednik algebra. Note we can apply twisting because we have $m=2,p=1$, so $\frac{m}{p}$ is even.\bb
%
%\nt With $r=1$, the condition on $c:S\rt \Cc$ in \eqref{params} becomes $2c_1+2c_{-1}=1$. Taking $c$ such that this is satisfied, Theorem \ref{chmutova} tells us there exists an $H_c(G(2,1,2))$-module $Y_c$ that is a quotient of $M_c(\Cc)$, and as $\Cc G(2,1,2)$-modules, $Y_c\cong U_1^{\otimes 2}$. By above we know $U_1^{\otimes 2}$ is the trivial representation of $G(2,1,2)$, so in particular is $1$-dimensional. Next we show how the generators of $H_c(G(2,1,2))$ act on $Y_c$, so that we may see how this changes upon applying the twisting procedure. Since we know $G(2,1,2)$ acts trivially on $Y_c$ it remains to see how the $x_i$ and $y_i$ act.\bb

\nt To find the parameters $c$ where $\dim L_c(\tau)=1$, we note that commutators must act on a one-dimensional module by zero, so \eqref{commutators} implies that the generators $x_i$ and $y_i$, $i\in [n]$, act by $0$. Most relations in Definition~\ref{def:RCA} are satisfied by $x_i=y_i=0$ automatically: the relation $y_i x_j - x_j y_i = c_1 \sum_{\epsilon\in \{\pm1\}} \epsilon s_{ij}^{(\epsilon)}$ holds because 
$s_{ij}$ and $s_{ij}^{(-1)}$ act on $L_c(\tau)$ by the same scalar. The only constraint on the parameter~$c=(c_1,c_{-1})$ arises from the last relation which reads
\begin{equation}\label{eq:constraint}
0 = 1 - c_1 \cdot (n-1)\cdot 2\tau(s_i) - c_{-1}\tau(t_i^{(-1)}).
\end{equation}

\nt Let $\tau$ be one of the four characters of $G$ given in~\eqref{eq:characters}, and assume that $c$ satisfies~\eqref{eq:constraint} so that $L_c(\tau)$ is $1$-dimensional. We apply the twisting procedure from Section~\ref{twist_procedure_sec} to the $H_c$-module $L_c(\tau)$. The action of the group $T$ on $\End(L_c(\tau))$ is via conjugation, however, $\End(L_c(\tau))\cong\Cc$ is commutative. Hence $T$ acts trivially, and $\End(L_c(\tau))_{\mathcal F} = \End(L_c(\tau))$.\bb 

\nt We obtain the following $1$-dimensional representation of the 
negative braided Cherednik algebra $\underline H_{\underline c} = \underline H_{\underline c}(\mu(G))$,
\begin{equation}\label{eq:composite_action}
\underline H_{\underline c} \xrightarrow{\phi} (H_c)_{\mathcal F} \xrightarrow{\rho_\tau} \End(L_c(\tau))_{\mathcal F} \cong \Cc.    
\end{equation}
Here $\rho_\tau$ is the algebra homomorphism we arrive at from Proposition~\ref{twisted_hom}. Denote the $1$-dimensional $\underline H_{\underline c}$-module 
given by \eqref{eq:composite_action} by $L_c(\tau)_{\mathcal F}$.\bb

\nt Recall from \eqref{mystic_complex_reln} that the group $\mu(G)$ is the same as the group $G$. To characters $\tau$ of $\mu(G)=G$ there correspond one-dimensional representations $\underline{L}_{\underline c}(\tau)$ of the negative braided Cherednik algebra $\underline H_{\underline c}(G)$
where the ${\ux}_i$ and $\uy_i$ act by $0$ and elements of $G$ act via $\tau$. We can now identify the twists $L_c(\tau)_{\mathcal F}$ 
as certain representations $\underline{L}_{\underline c}(\tau')$
of $\underline H_{\underline c}(G)$, as follows:
$$
\tau'(\sigma_i)  = \rho_\tau(\phi(\sigma_i)) = 
\rho_\tau(\bar s_i) = \rho_\tau(\sigma_i t_i) = \tau(\sigma_i)\tau(t_i),
\qquad
\tau'(t_i) = \rho_\tau(\phi(t_i)) = \rho_\tau(t_i) = \tau(t_i).
$$
This means that twisting induces a non-trivial permutation of 
linear characters of the group $G(2,1,n)$, resulting in the following theorem which concludes the paper:
\begin{theorem}[Twists of one-dimensional representations of $H_c(G(2,1,n))$]
The twisting procedure outlined above maps one-dimensional representations of $H_c(G(2,1,n))$ to one-dimensional representations of $\underline H_{\underline c}(G(2,1,n))$ as follows,
$$
L_c(\mathrm{triv})_{\mathcal F} = \underline L_{\underline c}(\mathrm{triv}), \quad
L_c(\kappa)_{\mathcal F} = \underline L_{\underline c}(\det), \quad
L_c(\det)_{\mathcal F} = \underline L_{\underline c}(\kappa), \quad
L_c(\kappa\det)_{\mathcal F} = \underline L_{\underline c}(\kappa\det).
$$
\end{theorem}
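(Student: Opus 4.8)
The plan is to note that, by the time we reach this statement, essentially all the substance has been set up, and what remains is a short comparison with the list \eqref{eq:characters}. Recall: a one-dimensional $H_c(G(2,1,n))$-module is precisely a linear character $\tau$ of $G:=G(2,1,n)$ satisfying the constraint \eqref{eq:constraint}; the twisting procedure of Section~\ref{twist_procedure_sec} attaches to it the one-dimensional $\underline H_{\underline c}(\mu(G))$-module $L_c(\tau)_{\mathcal F}$ via the composite \eqref{eq:composite_action}; and, since $\mu(G)=G$ here and $\phi({\ux}_i)=x_i$, $\phi(\uy_i)=y_i$ act by $0$, this module is $\underline L_{\underline c}(\tau')$ for the character $\tau'$ of $G$ which the display preceding the Theorem computes to be $\tau'(\sigma_i)=\tau(\sigma_i)\tau(t_i^{(-1)})$ and $\tau'(t_i^{(-1)})=\tau(t_i^{(-1)})$, where $\sigma_i:=\sigma_{i,i+1}^{(1)}$. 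So the only remaining task is to rewrite $\tau'$ in the $(s,t)$-parametrisation of \eqref{eq:characters} and to read off which of the four named characters it is.

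First I would record that $\sigma_i$, $i\in[n-1]$, together with the $t_i^{(-1)}$, $i\in[n]$, generate $G$: in $\mathbb S_n\ltimes\mathbb T_n$ we have $\sigma_i=s_{i,i+1}t_{i+1}^{(-1)}$, hence $s_{i,i+1}=\sigma_i t_{i+1}^{(-1)}$, and the $s_{i,i+1}$ together with $T(2,1,n)$ generate $\mathbb S_n\ltimes T(2,1,n)=G$. Consequently $\tau'$ is determined by its values on the $\sigma_i$ and $t_i^{(-1)}$; since moreover the transpositions form one conjugacy class of reflections of $G$ and the $t_i^{(-1)}$ another, $\tau'$ is determined by the pair $(\tau'(s),\tau'(t))$ with $s=s_{i,i+1}$, $t=t_i^{(-1)}$, exactly as in \eqref{eq:characters}. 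Substituting $\tau(\sigma_i)=\tau(s_{i,i+1})\tau(t_{i+1}^{(-1)})=\tau(s)\tau(t)$ into the formulas for $\tau'$ above gives $\tau'(\sigma_i)=\tau(s)\tau(t)^2=\tau(s)$ and $\tau'(t_i^{(-1)})=\tau(t)$, whence $\tau'(s)=\tau'(\sigma_i)\tau'(t_{i+1}^{(-1)})=\tau(s)\tau(t)$ and $\tau'(t)=\tau(t)$. In other words, twisting implements the involution $(\tau(s),\tau(t))\mapsto(\tau(s)\tau(t),\tau(t))$ on the four linear characters.

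Running the four characters of \eqref{eq:characters} through this rule finishes the proof: $\mathrm{triv}=(1,1)\mapsto(1,1)=\mathrm{triv}$, $\kappa=(1,-1)\mapsto(-1,-1)=\det$, $\det=(-1,-1)\mapsto(1,-1)=\kappa$, and $\kappa\det=(-1,1)\mapsto(-1,1)=\kappa\det$, which is precisely the displayed list. I do not anticipate any genuine obstacle; the one point demanding care is the notational overlap whereby $\sigma_i$ must be read throughout as the concrete element $\sigma_{i,i+1}^{(1)}=s_{i,i+1}t_{i+1}^{(-1)}$ of $G=\mu(G)$, not as a free generator of the presentation in Theorem~\ref{mystic_presentation}. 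For completeness one can also verify directly that $\underline L_{\underline c}(\tau')$ is a bona fide $\underline H_{\underline c}(G)$-module, i.e. that the parameter constraint imposed on it by the $\uy_i{\ux}_i-{\ux}_i\uy_i$ relation matches \eqref{eq:constraint} after the substitutions $\underline c_1=-c_1$, $\underline c_{-1}=-c_{-1}$, $\tau'(s)=\tau(s)\tau(t)$, $\tau'(t)=\tau(t)$; but this is automatic, since $\tau'$ already arises from the honest representation $\rho_\tau\circ\phi$ supplied by Proposition~\ref{twisted_hom}.
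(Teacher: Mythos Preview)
Your proposal is correct and follows essentially the same approach as the paper: the paper's proof consists entirely of the display immediately preceding the theorem, namely $\tau'(\sigma_i)=\tau(\sigma_i)\tau(t_i)$ and $\tau'(t_i)=\tau(t_i)$, with the four-case verification left to the reader. You have simply made explicit the conversion to the $(s,t)$-parametrisation of \eqref{eq:characters} and carried out that case check; this adds helpful detail but no new ideas.
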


\iffalse

\nt Recall that by Theorem \ref{twisted_hom}, $\rho$ is also an algebra homomorphism between the twisted algebras, i.e. $\rho: H_c(G(2,1,2))_\mathcal{F}\rt \End(Y_c)_\mathcal{F}$. Additionally recall the action of $\Cc T$ on $\End(Y_c)$ was given by $t \RHD f=\rho(t)f\rho(t)^{-1}$ for all $t\in T$, $f\in \End(V)$. Since $t_i^{(\zeta)}\in G(2,1,2)$ act trivially on $Y_c$, we find that the cocycle $\mathcal{F}$ also acts trivially on $\End(Y_c)$. Therefore $\End(Y_c)_\mathcal{F}$ and  $\End(Y_c)$ are equal as algebras. Composing the (twisted version of) map $\rho$ with the map $\phi:\underline{H}_{\underline{c}}(\mu(G(m,p,n)))\rt H_c(G(m,p,n))_\mathcal{F}$ from Theorem \ref{main_result}, we get an algebra homomorphism 
$$\rho\circ \phi:\underline{H}_{\underline{c}}(\mu(G(2,1,2)))\rt \End(Y_c)$$
This is a $1$-dimensional representation of the negative braided Cherednik algebra. Notice that:
\begin{align}\begin{split}\label{non_trivial_action}
(\rho\circ \phi)(\sigma_{12}^{(1)}) & =\rho(-s_{12}^{(-1)}) =-1\\
(\rho\circ \phi)(\sigma_{12}^{(-1)}) & =\rho(-s_{12}^{(1)}) =-1
\end{split}\end{align}
while $\rho\circ \phi$ also maps $t_i^{(-1)}\mapsto 1$, ${\ux}_i\mapsto 0$ and $\underline{y}_i \mapsto 0$ for $i=1,2$. So we started with a representation of $H_c(G(2,1,2))$ in which $G(2,1,2)$ acts trivially, however on twisting we arrive at a representation of the negative braided Cherednik algebra in which, by \eqref{non_trivial_action}, $\mu(G(2,1,2))$ acts non-trivially.
\fi

\bibliographystyle{plain}
\bibliography{mybib}

\end{document}